\numberwithin{equation}{section}
\newtheorem{thm}{Theorem}[section]
\newtheorem{lem}[thm]{Lemma}
\newtheorem{prop}[thm]{Proposition}
\theoremstyle{definition}
\newtheorem{defin}[thm]{Definition}
\DeclareMathOperator{\spann}{span}
\DeclareMathOperator{\domain}{Dom}
\DeclareMathOperator{\sign}{sign}
\DeclareMathOperator*{\id}{Id}
\begin{document}

\allowdisplaybreaks

\renewcommand{\thefootnote}{$\star$}

\newcommand{\arXivNumber}{1505.01653}

\renewcommand{\PaperNumber}{073}

\FirstPageHeading

\ShortArticleName{Potential and Sobolev Spaces Related to Symmetrized Jacobi Expansions}

\ArticleName{Potential and Sobolev Spaces\\ Related to Symmetrized Jacobi Expansions\footnote{This paper is a~contribution to the Special Issue
on Orthogonal Polynomials, Special Functions and Applications.
The full collection is available at \href{http://www.emis.de/journals/SIGMA/OPSFA2015.html}{http://www.emis.de/journals/SIGMA/OPSFA2015.html}}}

\Author{Bartosz LANGOWSKI}

\AuthorNameForHeading{B.~Langowski}

\Address{Wydzia\l{} Matematyki,
      Politechnika Wroc\l{}awska,\\
      Wyb{.}\ Wyspia\'nskiego 27,
      50--370 Wroc\l{}aw, Poland}
\Email{\href{mailto:bartosz.langowski@pwr.edu.pl}{bartosz.langowski@pwr.edu.pl}}

\ArticleDates{Received May 08, 2015, in f\/inal form September 10, 2015; Published online September 12, 2015}

\Abstract{We apply a symmetrization procedure to the setting of Jacobi expansions and study potential spaces
in the resulting situation. We prove that the potential spaces of integer orders
are isomorphic to suitably def\/ined Sobolev spaces. Among further results, we obtain
a fractional square function characterization, structural theorems and Sobolev type
embedding theorems for these potential spaces.}

\Keywords{Jacobi expansion; potential space; Sobolev space; fractional square function}

\Classification{42C10; 42C05; 42C20}

\renewcommand{\thefootnote}{\arabic{footnote}}
\setcounter{footnote}{0}

\section{Introduction} \label{sec:intro}

This article is motivated by the recent results of Nowak and Stempak \cite{Symmetrized} and the
author's papers \cite{L2,L3}. In \cite{L2} we investigated Sobolev and potential spaces related to
discrete Jacobi function expansions. The main achievement of \cite{L2} is a suitable def\/inition of
the Sobolev--Jacobi spaces so that they are isomorphic with the potential spaces with appropriately
chosen parameters. The article~\cite{L2} is a continuation and extension of a similar study conducted
in the setting of ultraspherical expansions by Betancor et al.~\cite{betancor}.
The other author's paper~\cite{L3} contains further investigations of the Jacobi potential spaces.
The most important outcome of~\cite{L3} is a~characterization of the potential spaces by means of
suitably def\/ined fractional square functions. The research in~\cite{L3} was inspired by another paper of
Betancor et al.~\cite{betsq}, in which a general technique of using square functions in analysis of
potential spaces associated with discrete and continuous orthogonal expansions was developed.

On the other hand, in \cite{Symmetrized} Nowak and Stempak proposed a symmetrization procedure in a~context of general discrete orthogonal expansions related to a second-order dif\/ferential ope\-ra\-tor~$L$,
a `Laplacian'. This procedure, combined with a unif\/ied conjugacy scheme established in an earlier article
by the same authors~\cite{NoSt}, allows one to associate, via a suitable embedding, a~dif\/ferential-dif\/ference
`Laplacian'~$\mathbb{L}$ with the initially given orthogonal system of eigenfunctions of~$L$ so that the
resulting extended conjugacy scheme has the natural classical shape. In particular, the related `partial
derivatives' decomposing~$\mathbb{L}$ are skew-symmetric in an appropriate~$L^2$ space and they commute with
Riesz transforms and conjugate Poisson integrals. Thus the symmetrization procedure overcomes the main
inconvenience of the theory postulated in~\cite{NoSt}, that is the lack of symmetry in the principal objects
and relations resulting in essential deviations of the theory from the classical shape.  The price is,
however, that the `Laplacian'~$\mathbb{L}$ and the associated `derivatives' are not dif\/ferential, but
dif\/ferential-dif\/ference operators.
It was shown in~\cite{Symmetrized} that the symmetrization is supported by a good~$L^2$ theory.
Moreover, in~\cite{L1} the author verif\/ied that further support comes from the~$L^p$ theory, at least
when the Jacobi polynomial context is considered.

In the present paper we apply the above mentioned symmetrization procedure to the setting of Jacobi
function expansions considered in~\cite{L2,L3}. Then we def\/ine and study the associated potential
spaces and Sobolev spaces. As the main results, we establish an isomorphism between these spaces
(Theorem~\ref{thm:sob1}) and characterize the potential spaces by means of suitably def\/ined fractional
square functions (Theorems~\ref{thm:char} and~\ref{equivfun'}). Among further results, we prove
structural and embedding theorems for the potential spaces, in particular we obtain
a counterpart of the classical Sobolev embedding theorem in the Jacobi setting (see Theorems~\ref{thm:struc}
and~\ref{thm:embed}). All of this extends the results from~\cite{L2,L3} to the symmetrized situation.

The general strategy we use to prove the results in the symmetrized setting relies on two steps.
In the f\/irst step we exploit symmetries of the operators under consideration in order to reduce the
analysis essentially to the initial non-symmetrized case. Then the second step consists in taking
advantage of the results already existing in the literature, mostly from author's previous
articles \cite{L2,L3}. Even though the general line of reasoning is relatively easy, some details
occur to be rather technical and complex.

An important aspect, and in fact also a partial motivation of our research, is the suggestion from
\cite[Section 5]{Symmetrized} that the symmetrization could have a signif\/icant impact on developing
the theory of Sobolev spaces related to orthogonal expansions. This concerns, in particular, higher-order
`derivatives' leading to appropriate Sobolev spaces. It turns out, however, that in our symmetrized
framework the relevant higher-order `derivatives' are not constructed from the f\/irst-order `derivative'
(see Proposition~\ref{prop:neg}), as one would perhaps expect after reading the optimistic comments
in \cite[Section~5]{NoSt}.
Thus these derivatives are even more exotic than the variable index derivatives that are suitable in
the initial non-symmetrized Jacobi setting. So it seems that the symmetrization brings no improvement
in dealing with Sobolev spaces, at least in the Jacobi setting considered. This makes a noteworthy
contrast to the conjugacy scheme which benef\/its a lot from the symmetrization.

Sobolev and potential spaces related to dif\/ferent classical orthogonal expansions were investigated
in recent years by various authors, see, e.g.,
\cite{BaUr1,BaUr2,betancor,betsq,BT1,BT2,Graczyk,L2,L3,RT}.
On the other hand, harmonic analysis in several frameworks of Jacobi expansions was in the last decade
studied in \cite{BaUr1,BaUr2,CU,CNS,L1,L2,L3,Nowak&Roncal,NoSj,NS1,NS2,parameters,Stempak} (see also
\cite{CRS,CRS2}), among others. Our present work contributes to both of these lines of research.

An interesting study of variable exponent Sobolev spaces for Jacobi expansions is contained in the recent
paper by Almeida, Betancor, Castro, Sanabria and Scotto~\cite{varex}. The results of~\cite{varex} are
related to those in the author's papers~\cite{L2,L3}, but were obtained independently. In particular,
there is a partial overlap in characterizations of the Jacobi potential spaces via fractional square
functions obtained in~\cite{L3} and~\cite{varex}. We thank one of the referees for bringing~\cite{varex}
to our attention.

\subsection*{Notation}

Throughout the paper, we use a fairly standard notation with essentially all symbols referring either to
the measure space $((-\pi,\pi),d\theta)$ or the restricted space $((0,\pi),d\theta)$.
Given a function~$f$ on $(-\pi,\pi)$, we denote by $f^{+}$ its restriction to the subinterval $(0,\pi)$,
and by $f_{\textrm{even}}$ and $f_{\textrm{odd}}$ its even and odd parts, respectively,
\begin{gather*}
f_{\textrm{even}}(\theta) = \frac{f(\theta)+f(-\theta)}2, \qquad
f_{\textrm{odd}}(\theta) = \frac{f(\theta)-f(-\theta)}2.
\end{gather*}
We let
\begin{gather*}
\langle f_1,f_2 \rangle = \int_{-\pi}^{\pi} f_1(\theta)\overline{f_2(\theta)}\, d\theta, \qquad
\langle h_1,h_2 \rangle_{+} = \int_0^{\pi} h_1(\theta)\overline{h_2(\theta)} \, d\theta,
\end{gather*}
whenever the integrals make sense. For $1 \le p \le \infty$, $p'$~denotes its conjugate exponent,
$1/p+1/p'=1$. When writing estimates, we will use the notation $X \lesssim Y$ to indicate that
$X \le CY$ with a positive constant $C$ independent of signif\/icant quantities. We shall write
$X \simeq Y$ when simultaneously $X \lesssim Y$ and $Y \lesssim X$.

\section{Preliminaries}\label{sec:prel}

Given parameters $\alpha, \beta>-1$, consider the Jacobi dif\/ferential operator
\begin{gather*}
L_{\alpha,\beta}= -\frac{d^2}{d\theta^2}
-\frac{1-4\alpha^2}{16\sin^2\frac{\theta}{2}}-\frac{1-4\beta^2}{16\cos^2\frac{\theta}{2}}
 = D_{\alpha,\beta}^* D_{\alpha,\beta} + A^2_{\alpha,\beta};
\end{gather*}
here $A_{\alpha,\beta}=(\alpha+\beta+1)/2$ is a constant, and
\begin{gather} \label{der_ini}
D_{\alpha,\beta}=\frac{d}{d\theta}-\frac{2\alpha+1}{4}\cot\frac{\theta}{2}+\frac{2\beta+1}{4}\tan\frac{\theta}{2},
\qquad D_{\alpha,\beta}^* = D_{\alpha,\beta}-2\frac{d}{d\theta},
\end{gather}
are the f\/irst-order `derivative' naturally associated with $L_{\alpha,\beta}$ and
its formal adjoint in $L^2(0,\pi)$, respectively.
It is well known that $L_{\alpha,\beta}$, def\/ined initially on $C_c^2(0,\pi)$, has a non-negative self-adjoint
extension in $L^2(0,\pi)$ whose spectral decomposition is discrete and given by the Jacobi functions
$\phi_n^{\alpha,\beta}$, $n \ge 0$. The corresponding eigenvalues are $\lambda_n^{\alpha,\beta} = (n+A_{\alpha,\beta})^2$,
and the system $\{\phi_n^{\alpha,\beta}\colon n\ge 0\}$ constitutes an orthonormal basis in $L^2(0,\pi)$.
Some problems in harmonic analysis related to~$L_{\alpha,\beta}$ were investigated recently in~\cite{varex,L2,L3,Nowak&Roncal,NS2,Stempak}.

When ${\alpha,\beta} \ge -1/2$, the functions $\phi_n^{\alpha,\beta}$ belong to all $L^p(0,\pi)$, $1\le p \le \infty$.
However, if $\alpha< -1/2$ or $\beta < -1/2$, then $\phi_n^{\alpha,\beta}$ are in $L^p(0,\pi)$ if and only if
$p < -1/\min(\alpha+1/2,\beta+1/2)$. This leads to the restriction
$p'({\alpha,\beta}) < p < p({\alpha,\beta})$ for $L^p$ mapping properties of various operators associated with
$L_{\alpha,\beta}$, where
\begin{gather*}
p({\alpha,\beta}):= 	\begin{cases}
						\infty, & {\alpha,\beta} \ge -1/2, \\
						-1/\min(\alpha+1/2,\beta+1/2), & \textrm{otherwise}.
					\end{cases}
\end{gather*}
Consequently, many results in harmonic analysis of $L_{\alpha,\beta}$ are restricted to $p \in E({\alpha,\beta})$,
\begin{gather*}
E({\alpha,\beta}) := \big(p'({\alpha,\beta}),p({\alpha,\beta})\big).
\end{gather*}

In this work we shall consider the setting related to the larger interval $(-\pi,\pi)$ equipped with
Lebesgue measure. An application of the symmetrization procedure from \cite{Symmetrized} to the
context of~$L_{\alpha,\beta}$ brings in the following symmetrized Jacobi `Laplacian' and the associated `derivative':
\begin{gather*}
\mathbb{L}_{\alpha,\beta}=-\mathbb{D}_{\alpha,\beta}^2+A_{\alpha,\beta}^2,
\end{gather*}
with
\begin{gather*}
\mathbb{D}_{\alpha,\beta}f=\frac{df}{d\theta}-\left(\frac{2\alpha+1}{4}
\cot\frac{\theta}{2}-\frac{2\beta+1}{4}\tan\frac{\theta}{2}\right)\check{f}=D_{\alpha,\beta}f_{\textrm{even}}-D_{\alpha,\beta}^*f_{\textrm{odd}},
\end{gather*}
where $\check{f}(\theta)=f(-\theta)$ is the ref\/lection of $f$, and $D_{\alpha,\beta}$ and $D_{\alpha,\beta}^*$ are
given on $(-\pi,\pi)$ by~\eqref{der_ini}.
Note that, due to the ref\/lection occurring in~$\mathbb{D}_{\alpha,\beta}$, we deal here with a Dunkl type operator.
For more details concerning Jacobi--Dunkl operators we refer to~\cite{chouchene}, see also
\cite[Section~7]{BK}.

Also, the following remark is in order. Formally, the space underlying the symmetrized setting
is the sum $(-\pi,0) \cup (0,\pi)$. Nevertheless, often it can (and will) be identif\/ied with the
interval $(-\pi,\pi)$, since for some aspects of the theory the single point $\theta=0$ is negligible.
A~typical example here are~$L^p$ inequalities which ``do not see'' sets of null measure. On the other hand,
some objects in the symmetrized situation may not even be properly def\/ined at $\theta=0$ (the latter
may in addition depend on the parameters of type), hence this point must be excluded from some considerations
like, for instance, continuity or smoothness questions. That is why in what follows several times
$(-\pi,\pi){\setminus} \{0\}$ appears rather than~$(-\pi,\pi)$.

The orthonormal basis in $L^2(-\pi,\pi)$ arising from the symmetrization procedure applied
to the system of Jacobi functions is
\begin{gather*} %\label{Phiphi}
\Phi_n^{\alpha,\beta}=\frac{1}{\sqrt{2}}\begin{cases}\phi_{n/2}^{\alpha,\beta},&  \textrm{$n$ even},\\
-\big(\lambda_{(n+1)/2}^{\alpha,\beta}-\lambda_0^{\alpha,\beta}\big)^{-1/2} D_{\alpha,\beta}\phi_{(n+1)/2}^{\alpha,\beta}, &
  \textrm{$n$ odd},
\end{cases}
\end{gather*}
where $\phi_n^{\alpha,\beta}$ are even extensions (denoted still by the same symbol)
to $(-\pi, \pi)$ of the Jacobi functions. More precisely,
\begin{gather*}
\phi_n^{\alpha,\beta}(\theta) = c_n^{\alpha,\beta}  \Psi^{\alpha,\beta}(\theta) P_n^{\alpha,\beta}(\cos\theta),
	\qquad \theta \in (-\pi,\pi), \quad n \ge 0,
\end{gather*}
where $c_n^{\alpha,\beta}$ are suitable normalizing constants, $P_n^{\alpha,\beta}$ are the classical Jacobi polynomials
as def\/ined in Szeg\H{o}'s monograph~\cite{Sz}, and
\begin{gather*}
\Psi^{\alpha,\beta}(\theta) := \left|\sin\frac{\theta}2\right|^{\alpha+1/2}\left(\cos\frac{\theta}2\right)^{\beta+1/2}.
\end{gather*}
Observe that $D_{\alpha,\beta}f$ is an odd (even) function if~$f$ is even (odd). Consequently,
$\Phi_n^{\alpha,\beta}$ is even (odd) if and only if~$n$ is an even (odd) number. By using \cite[formula~(5)]{L2}
we f\/ind that
\begin{gather} \label{Phi}
\Phi_{2n}^{\alpha,\beta}(\theta) = \frac{1}{\sqrt{2}} \phi_n^{\alpha,\beta}(\theta), \qquad
\Phi_{2n+1}^{\alpha,\beta}(\theta) = \frac{1}{\sqrt{2}} \sign(\theta)\phi_{n}^{\alpha+1,\beta+1}(\theta), \qquad n \ge 0.
\end{gather}
Notice that when $\alpha \ge -1/2$, all $\Phi_n^{\alpha,\beta}$, $n \ge 0$, are continuous functions on $(-\pi,\pi)$;
on the other hand, for $\alpha < -1/2$ and $n$ even a singularity at $\theta = 0$ occurs.
It is a nice coincidence that in our setting~$\Phi_n^{\alpha,\beta}$ are essentially~$\phi_k^{\alpha,\beta}$ or~$\phi_k^{\alpha,\beta}$ with shifted parameters. Roughly speaking, this makes the analysis in the symmetrized
situation reducible to the analysis in the initial, non-symmetrized setting. In general, and even in other
Jacobi contexts (see, e.g.,~\cite{L1}), things are more complicated.

According to \cite[Lemma~3.5]{Symmetrized}, each $\Phi_{n}^{\alpha,\beta}$ is an eigenfunction of the
symmetrized Jacobi operator. More precisely,
\begin{gather*}
\mathbb{L}_{\alpha,\beta}\Phi_n^{\alpha,\beta} = \lambda^{\alpha,\beta}_{\langle n \rangle} \Phi_n^{\alpha,\beta},
	\qquad n \ge 0,
\end{gather*}
where we use the notation $\langle n \rangle=\big\lfloor\frac{n+1}2 \big\rfloor$ introduced in
\cite{Symmetrized} (here $\lfloor\cdot\rfloor$ denotes the f\/loor function). Thus $\mathbb{L}_{\alpha,\beta}$,
considered initially on $C_c^{2}((-\pi,\pi){\setminus} \{0\})$, has a natural self-adjoint extension to
$L^2(-\pi,\pi)$, denoted by the same symbol, and given by
\begin{gather*}
\mathbb{L}_{\alpha,\beta} f = \sum_{n=0}^{\infty} \lambda_{\langle n \rangle}^{\alpha,\beta}
	\langle f, \Phi_n^{\alpha,\beta} \rangle \Phi_n^{\alpha,\beta}
\end{gather*}
on the domain $\domain\mathbb{L}_{\alpha,\beta}$ consisting of all functions $f\in L^2(-\pi, \pi)$
for which the def\/ining series converges in $L^2(-\pi,\pi)$; see \cite[Section~4]{Symmetrized}.

Next, we gather some facts about potential operators associated with $\mathbb{L}_{\alpha,\beta}$. Let $\sigma > 0$.
We consider the {Riesz type potentials} $\mathbb{L}_{\alpha,\beta}^{-\sigma}$ assuming that $\alpha+\beta\neq -1$
(when $\alpha+\beta = -1$, the bottom eigenvalue of $\mathbb{L}_{\alpha,\beta}$ is $0$) and the {Bessel type
potentials} $(\id+\mathbb{L}_{\alpha,\beta})^{-\sigma}$ with no restrictions on~$\alpha$ and~$\beta$.
Clearly, these operators are well def\/ined spectrally and bounded in~$L^2(-\pi,\pi)$.
The spectral decomposition of $\mathbb{L}_{\alpha,\beta}^{-\sigma}$ is given by
\begin{gather*}
\mathbb{L}_{\alpha,\beta}^{-\sigma}f=\sum_{n=0}^{\infty}\big(\lambda_{\langle n\rangle}^{\alpha,\beta}\big)^{-\sigma}
	\langle f,\Phi_n^{\alpha,\beta}\rangle \Phi_n^{\alpha,\beta}, \qquad f\in L^2(-\pi,\pi).
\end{gather*}
Splitting $f$ into its even and odd parts, we can write
\begin{gather} \nonumber
\mathbb{L}_{\alpha,\beta}^{-\sigma} f    = \mathbb{L}_{\alpha,\beta}^{-\sigma} f_{\textrm{even}}
	+ \mathbb{L}_{\alpha,\beta}^{-\sigma} f_{\textrm{odd}} \\ \nonumber
\hphantom{\mathbb{L}_{\alpha,\beta}^{-\sigma} f}{} = \sum_{n=0}^{\infty} \big(\lambda_{n}^{\alpha,\beta}\big)^{-\sigma} \langle f_{\textrm{even}},
	\Phi_{2n}^{\alpha,\beta} \rangle \Phi_{2n}^{\alpha,\beta} + \sum_{n=0}^{\infty} \big(\lambda_{n+1}^{\alpha,\beta}\big)^{-\sigma}
	 \langle f_{\textrm{odd}}, \Phi_{2n+1}^{\alpha,\beta} \rangle \Phi_{2n+1}^{\alpha,\beta} \\
\hphantom{\mathbb{L}_{\alpha,\beta}^{-\sigma} f}{} \equiv (\mathbb{L}_{\alpha,\beta}^{-\sigma})_{\textrm{e}}f_{\textrm{even}}
	+ (\mathbb{L}_{\alpha,\beta}^{-\sigma})_{\textrm{o}}f_{\textrm{odd}}. \label{dec}
\end{gather}
This is the decomposition of $\mathbb{L}_{\alpha,\beta}^{-\sigma}f$ into its even and odd parts, since the two terms
in \eqref{dec} are~even and odd functions, respectively. Clearly, an analogous decomposition holds for
\linebreak $(\id + \mathbb{L}_{\alpha,\beta})^{-\sigma}$. We shall use these facts in the sequel.

\begin{prop}  \label{ogr}
Let ${\alpha,\beta} > -1$ and $\sigma > 0$. Assume that $p > p'({\alpha,\beta})$ and $1 \le q < p({\alpha,\beta})$. Then
$\mathbb{L}_{\alpha,\beta}^{-\sigma}$, $\alpha+\beta \neq -1$, and $(\id+\mathbb{L}_{\alpha,\beta})^{-\sigma}$,
defined initially on $L^2(-\pi,\pi)$, extend to  bounded operators from
$L^p(-\pi,\pi)$ to $L^q(-\pi,\pi)$ if and only if
\begin{gather*}
\frac{1}q \ge \frac{1}{p} - 2\sigma.
\end{gather*}

Moreover, these operators extend to bounded operators from
$L^p(-\pi,\pi)$ to $L^{\infty}(-\pi,\pi)$ if and only if
\begin{gather*}
{\alpha,\beta}\geq-1/2\qquad\textrm{and}\qquad \frac{1}{p}<2\sigma.
\end{gather*}
\end{prop}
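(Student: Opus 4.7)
The plan is to reduce the statement to the analogous $L^p$--$L^q$ bounds for the non-symmetrized Jacobi potentials $L_{\alpha,\beta}^{-\sigma}$ and $(\id+L_{\alpha,\beta})^{-\sigma}$ on $(0,\pi)$, which are already available in the literature (cf.\ \cite{L2} and the references there). The bridge is the even/odd decomposition~\eqref{dec}. Using \eqref{Phi} together with the identity $\lambda_{n+1}^{\alpha,\beta}=\lambda_{n}^{\alpha+1,\beta+1}$ (a consequence of $A_{\alpha+1,\beta+1}=A_{\alpha,\beta}+1$) and a short computation with the expansion coefficients, I would show that, for $\theta\in(0,\pi)$,
\begin{gather*}
\bigl(\mathbb{L}_{\alpha,\beta}^{-\sigma}\bigr)_{\textrm{e}}f_{\textrm{even}}(\theta)
   = L_{\alpha,\beta}^{-\sigma}\bigl(f_{\textrm{even}}^{+}\bigr)(\theta),\qquad
\bigl(\mathbb{L}_{\alpha,\beta}^{-\sigma}\bigr)_{\textrm{o}}f_{\textrm{odd}}(\theta)
   = \sign(\theta)\, L_{\alpha+1,\beta+1}^{-\sigma}\bigl(f_{\textrm{odd}}^{+}\bigr)(|\theta|),
\end{gather*}
with the same identities for the Bessel potentials (with $L$ replaced by $\id+L$ on the right). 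Since $\|f_{\textrm{even}}\|_{L^p(-\pi,\pi)}\simeq\|f_{\textrm{even}}^{+}\|_{L^p(0,\pi)}$ and likewise for $f_{\textrm{odd}}$, this reduces the sought boundedness to the simultaneous $L^p(0,\pi)\to L^q(0,\pi)$ boundedness of the non-symmetrized operators with parameters $(\alpha,\beta)$ and $(\alpha+1,\beta+1)$.

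For the sufficiency part, I would then invoke the known $L^p(0,\pi)\to L^q(0,\pi)$ bounds for Riesz and Bessel potentials of $L_{\alpha,\beta}$, which hold precisely when $1/q\ge 1/p-2\sigma$ under the restrictions $p>p'(\alpha,\beta)$ and $q<p(\alpha,\beta)$. Observe that $p(\alpha+1,\beta+1)=\infty$ and $p'(\alpha+1,\beta+1)=1$, so the shifted operator on the odd piece is bounded on a strictly wider range of exponents; the binding constraints therefore come solely from the even piece and coincide with those stated in the proposition. The $L^p\to L^\infty$ case is handled in the same way, using that $\|\Phi_n^{\alpha,\beta}\|_\infty<\infty$ requires exactly ${\alpha,\beta}\ge-1/2$.

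For necessity, I would lift counterexamples from $(0,\pi)$ to $(-\pi,\pi)$: any sequence of finite linear combinations of $\phi_k^{\alpha,\beta}$ violating a would-be inequality for $L_{\alpha,\beta}^{-\sigma}$ yields, after even extension, a corresponding sequence of finite combinations of $\Phi_{2k}^{\alpha,\beta}$ that contradicts the analogous inequality for $\mathbb{L}_{\alpha,\beta}^{-\sigma}$ (and similarly in the Bessel case). The main obstacle I expect is the bookkeeping in the reduction, in particular verifying the sign factor and normalizations in the odd-piece identity, confirming that the even piece truly controls the admissible range of $p,q$ (rather than the shifted-parameter odd piece), and ensuring that the argument covers the Riesz and Bessel cases uniformly — the Bessel case having no exclusion of $\alpha+\beta=-1$, which forces an extra check when $\lambda_0^{\alpha,\beta}=0$.
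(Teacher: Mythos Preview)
Your proposal is correct and follows essentially the same route as the paper: both reduce via the even/odd decomposition~\eqref{dec} and the identification~\eqref{Phi} (together with $\lambda_{n+1}^{\alpha,\beta}=\lambda_{n}^{\alpha+1,\beta+1}$) to the non-symmetrized potentials $L_{\alpha,\beta}^{-\sigma}$ and $L_{\alpha+1,\beta+1}^{-\sigma}$ on $(0,\pi)$, then invoke the known $L^p$--$L^q$ mapping results for those operators (the paper cites \cite[Theorem~2.4]{Nowak&Roncal} via \cite[Proposition~2.1]{L3}). Your explicit remark that the binding constraints come from the even piece because $p(\alpha+1,\beta+1)=\infty$, and your plan to lift counterexamples by even extension for the necessity direction, are exactly the implicit content of the paper's reduction.
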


\begin{proof}
We consider only the Riesz type potentials since the arguments for the Bessel type potentials are parallel.
Def\/ine the restricted operators acting initially on the smaller space~$L^2(0,\pi)$:
\begin{gather*}
(\mathbb{L}_{\alpha,\beta}^{-\sigma})_{\textrm{e}}^+ h   = \sum_{n=0}^{\infty} \big(\lambda_{n}^{\alpha,\beta}\big)^{-\sigma}
	\big\langle h, \big(\Phi_{2n}^{\alpha,\beta}\big)^+ \big\rangle_{+} \big(\Phi_{2n}^{\alpha,\beta}\big)^+,\qquad h\in L^{2}(0,\pi),\\
(\mathbb{L}_{\alpha,\beta}^{-\sigma})_{\textrm{o}}^+ h   = \sum_{n=0}^{\infty} \big(\lambda_{n+1}^{\alpha,\beta}\big)^{-\sigma}
	\big\langle h, \big(\Phi_{2n+1}^{\alpha,\beta}\big)^+ \big\rangle_{+} \big(\Phi_{2n+1}^{\alpha,\beta}\big)^+,\qquad h\in L^{2}(0,\pi).
\end{gather*}
Since $\|F\|_q \simeq \|F_{\textrm{even}}\|_q + \|F_{\textrm{odd}}\|_{q}$, taking into account
\eqref{dec} we have, for $f \in L^2(-\pi,\pi)$,
\begin{gather*}
\|\mathbb{L}_{\alpha,\beta}^{-\sigma}f\|_{L^q(-\pi,\pi)}   \simeq
	\big\|(\mathbb{L}_{\alpha,\beta}^{-\sigma})_{\textrm{e}}f_{\textrm{even}}\big\|_{L^q(-\pi,\pi)}
	+ \big\|(\mathbb{L}_{\alpha,\beta}^{-\sigma})_{\textrm{o}}f_{\textrm{odd}}\big\|_{L^q(-\pi,\pi)} \\
\hphantom{\|\mathbb{L}_{\alpha,\beta}^{-\sigma}f\|_{L^q(-\pi,\pi)}}{} = 2^{1/q} \Big( \big\|(\mathbb{L}_{\alpha,\beta}^{-\sigma})_{\textrm{e}}^+f^+_{\textrm{even}}\big\|_{L^q(0,\pi)}
	+ \big\|(\mathbb{L}_{\alpha,\beta}^{-\sigma})_{\textrm{o}}^+f_{\textrm{odd}}^+\big\|_{L^q(0,\pi)} \Big).
\end{gather*}
Thus the assertion we must prove is equivalent to the following:
$(\mathbb{L}_{\alpha,\beta}^{-\sigma})_{\textrm{e}}^+$ and $(\mathbb{L}_{\alpha,\beta}^{-\sigma})_{\textrm{o}}^+$,
def\/ined initially on~$L^2(0,\pi)$, extend simultaneously to bounded operators from $L^p(0,\pi)$ to
$L^q(0,\pi)$ if and only if $\frac{1}q \ge \frac{1}p - 2\sigma$; moreover, these operators extend
simultaneously to bounded operators from~$L^p(0,\pi)$ to $L^{\infty}(0,\pi)$ if and only if ${\alpha,\beta} \ge -1/2$
and $\frac{1}p < 2\sigma$.

Now it is enough to observe that, in view of \eqref{Phi} and the identity
$\lambda_{n+1}^{\alpha,\beta} = \lambda_n^{\alpha+1,\beta+1}$, the operators
$(\mathbb{L}_{\alpha,\beta}^{-\sigma})_{\textrm{e}}^+$ and $(\mathbb{L}_{\alpha,\beta}^{-\sigma})_{\textrm{o}}^+$ coincide,
up to the constant factor $1/2$, with the Riesz type potentials $L_{\alpha,\beta}^{-\sigma}$ and
$L_{\alpha+1,\beta+1}^{-\sigma}$ related to~$L_{\alpha,\beta}$ and investigated in~\cite{Nowak&Roncal}.
The conclusion then follows by \cite[Theorem~2.4]{Nowak&Roncal}, see \cite[Proposition~2.1]{L3}.
\end{proof}

The extensions from Proposition~\ref{ogr} are unique provided that $p < \infty$.
In this case we denote them by still the same and common symbol $\mathbb{L}_{\alpha,\beta}^{-\sigma}$.
It is worth noting that all these extensions are actually realized by an integral operator with a positive
kernel. But this fact is irrelevant for our purposes, therefore we omit further details.

Denote
\begin{gather*}
\mathbb{S}_{\alpha,\beta} := \spann\big\{\Phi_n^{\alpha,\beta}\colon n \ge 0 \big\}.
\end{gather*}
Since $\{\Phi_n^{\alpha,\beta}\colon n \ge 0\}$ is an orthonormal basis, $\mathbb{S}_{\alpha,\beta}$ is dense in $L^2(-\pi,\pi)$.
The latter property remains true in some $L^p$ spaces.
\begin{lem} \label{lem:dens}
Let ${\alpha,\beta}>-1$ and $1 \le p < p({\alpha,\beta})$. Then $\mathbb{S}_{\alpha,\beta}$ is a dense subspace of $L^p(-\pi,\pi)$.
\end{lem}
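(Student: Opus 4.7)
The plan is to reduce everything to the non-symmetrized setting by means of the even/odd decomposition. First observe that for $1\le p<p(\alpha,\beta)$ we have $\mathbb{S}_{\alpha,\beta}\subset L^p(-\pi,\pi)$: indeed, by~\eqref{Phi} the even basis elements $\Phi_{2n}^{\alpha,\beta}$ are (up to a constant) the even extensions of $\phi_n^{\alpha,\beta}$, so their $L^p(-\pi,\pi)$-integrability is equivalent to $\phi_n^{\alpha,\beta}\in L^p(0,\pi)$, which in turn is exactly the condition $p<p(\alpha,\beta)$; the odd elements $\Phi_{2n+1}^{\alpha,\beta}$ are (up to sign and constant) $\phi_n^{\alpha+1,\beta+1}$, and since $\alpha+1,\beta+1>0>-1/2$ these are bounded. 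So the statement to be proved makes sense.

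Next, given $f\in L^p(-\pi,\pi)$, write $f=f_{\textrm{even}}+f_{\textrm{odd}}$. Since both the even and odd parts have $L^p$-norm controlled by $\|f\|_{L^p(-\pi,\pi)}$, and any element of $\spann\{\Phi_{2n}^{\alpha,\beta}\}$ is even while any element of $\spann\{\Phi_{2n+1}^{\alpha,\beta}\}$ is odd, it suffices to approximate $f_{\textrm{even}}$ by elements of $\spann\{\Phi_{2n}^{\alpha,\beta}\}$ and $f_{\textrm{odd}}$ by elements of $\spann\{\Phi_{2n+1}^{\alpha,\beta}\}$, each in the $L^p(-\pi,\pi)$-norm. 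Because for an even or odd function $g$ on $(-\pi,\pi)$ one has $\|g\|_{L^p(-\pi,\pi)}=2^{1/p}\|g^+\|_{L^p(0,\pi)}$, and since by~\eqref{Phi}
\begin{gather*}
\big(\Phi_{2n}^{\alpha,\beta}\big)^+=\tfrac{1}{\sqrt 2}\phi_n^{\alpha,\beta},\qquad
\big(\Phi_{2n+1}^{\alpha,\beta}\big)^+=\tfrac{1}{\sqrt 2}\phi_n^{\alpha+1,\beta+1},
\end{gather*}
the matter reduces to showing that $\spann\{\phi_n^{\alpha,\beta}\colon n\ge 0\}$ and $\spann\{\phi_n^{\alpha+1,\beta+1}\colon n\ge 0\}$ are both dense in $L^p(0,\pi)$.

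The second of these is automatic for all $1\le p<\infty$, because the functions $\phi_n^{\alpha+1,\beta+1}$ are bounded (as $\alpha+1,\beta+1>-1/2$) and one has $p(\alpha+1,\beta+1)=\infty\ge p(\alpha,\beta)$. For the first, the density of the Jacobi function system in $L^p(0,\pi)$ in the full range $1\le p<p(\alpha,\beta)$ is the unsymmetrized analogue of the present lemma and is already available in the literature (see, e.g., the discussion in \cite{L2,Nowak&Roncal}); alternatively, it can be proved by a short duality argument. Namely, any $g\in L^{p'}(0,\pi)$ annihilating all $\phi_n^{\alpha,\beta}$ yields, via the substitution $x=\cos\theta$ together with $\phi_n^{\alpha,\beta}=c_n^{\alpha,\beta}\Psi^{\alpha,\beta}P_n^{\alpha,\beta}(\cos\cdot)$, a function on $(-1,1)$ orthogonal to every Jacobi polynomial $P_n^{\alpha,\beta}$; completeness of $\{P_n^{\alpha,\beta}\}$ in the weighted $L^2((-1,1),(1-x)^\alpha(1+x)^\beta\,dx)$ then forces $g\equiv 0$, after checking that the transformed function lies in this $L^2$ space—here the condition $p<p(\alpha,\beta)$, i.e.\ $p'>p'(\alpha,\beta)$, ensures the relevant integrability of $g\,\Psi^{\alpha,\beta}/\!\sqrt{1-x^2}$ against the Jacobi weight.

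The main obstacle is really only this last integrability/completeness check in the range $p>2$, where $L^{p'}(0,\pi)\not\subset L^2(0,\pi)$ and the easy Hilbert space argument fails; for $1\le p\le 2$ density is immediate from the orthonormal basis property of $\{\phi_n^{\alpha,\beta}\}$ in $L^2(0,\pi)$. Everything else in the argument is a straightforward symmetry consideration.
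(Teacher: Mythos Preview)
Your reduction via the even/odd decomposition and~\eqref{Phi} to the density of $\spann\{\phi_n^{\alpha,\beta}\}$ and $\spann\{\phi_n^{\alpha+1,\beta+1}\}$ in $L^p(0,\pi)$ is exactly the paper's route; the paper simply cites \cite[Lemma~2.3]{Stempak} for that non-symmetrized density fact and is done.

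One remark on your alternative duality sketch: you make it look harder than it is by aiming for the weighted $L^2$ space. For any $g\in L^{p'}(0,\pi)$ and $p<p(\alpha,\beta)$ one has $\Psi^{\alpha,\beta}\in L^p(0,\pi)$, so by H\"older $g\,\Psi^{\alpha,\beta}\in L^1(0,\pi)$; since each $P_n^{\alpha,\beta}(\cos\theta)$ is bounded, the vanishing of all $\langle g,\phi_n^{\alpha,\beta}\rangle_+$ says the finite measure $g(\theta)\Psi^{\alpha,\beta}(\theta)\,d\theta$ annihilates every polynomial in $\cos\theta$, and Weierstrass then gives $g\Psi^{\alpha,\beta}=0$ a.e., hence $g=0$. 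There is no genuine ``obstacle'' for $p>2$; the $L^2$ detour is unnecessary.
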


\begin{proof}
Take $f \in L^p(-\pi,\pi)$.
It suf\/f\/ices to approximate separately $f_{\textrm{even}}$ and $f_{\textrm{odd}}$.
Recall that the systems $\{\Phi_{2n}^{\alpha,\beta}\colon n \ge 0\}$ and $\{\Phi_{2n+1}^{\alpha,\beta}\colon n \ge 0\}$ consist of
even and odd functions, respectively. Moreover, each of these systems when restricted to $(0,\pi)$
is linearly dense in~$L^p(0,\pi)$, see~\eqref{Phi} and \cite[Lemma~2.3]{Stempak}.
Consequently, one can approximate~$f_{\textrm{even}}$ and $f_{\textrm{odd}}$ in~$ L^p(-\pi,\pi)$ by
f\/inite linear combinations of $\Phi_{2n}$, $n \ge 0$, and $\Phi_{2n+1}$, $n \ge 0$, respectively.
\end{proof}

\begin{lem} \label{lem:wsp}
Let ${\alpha,\beta}>-1$,  $p\in E({\alpha,\beta})$ and assume that $f \in L^p(-\pi,\pi)$.
If $\langle f, \Phi_n^{\alpha,\beta}\rangle =0$ for all $n \ge 0$, then $f \equiv 0$.
\end{lem}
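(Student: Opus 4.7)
The plan is to deduce the statement from the density Lemma~\ref{lem:dens} by a duality argument. The key point is that the admissible interval $E({\alpha,\beta})$ is symmetric under passage to the conjugate exponent, which allows Lemma~\ref{lem:dens} to be applied with $p'$ in place of $p$.

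First, I would verify that $p \in E({\alpha,\beta})$ implies $p' \in E({\alpha,\beta})$. Since $p({\alpha,\beta})' = p'({\alpha,\beta})$ directly from the definition, and conjugation $q \mapsto q'$ is an order-reversing involution on $[1,\infty]$, the condition $p'({\alpha,\beta}) < p < p({\alpha,\beta})$ is equivalent to $p'({\alpha,\beta}) < p' < p({\alpha,\beta})$. In particular $1 \le p' < p({\alpha,\beta})$, so Lemma~\ref{lem:dens} guarantees that $\mathbb{S}_{\alpha,\beta}$ is dense in $L^{p'}(-\pi,\pi)$. Moreover $p > p'({\alpha,\beta}) \ge 1$ forces $p' < \infty$, so the standard identification $(L^{p'}(-\pi,\pi))^* = L^p(-\pi,\pi)$ is at our disposal.

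Next, the hypothesis $\langle f, \Phi_n^{\alpha,\beta}\rangle = 0$ for every $n \ge 0$ extends by linearity (and by the fact that each $\Phi_n^{\alpha,\beta}$ is real-valued, so conjugation of coefficients is harmless) to $\langle f, g\rangle = 0$ for every $g \in \mathbb{S}_{\alpha,\beta}$. Since $f \in L^p(-\pi,\pi)$, H\"older's inequality makes $g \mapsto \langle f, g\rangle$ a continuous linear functional on $L^{p'}(-\pi,\pi)$. Density of $\mathbb{S}_{\alpha,\beta}$ in $L^{p'}(-\pi,\pi)$ then forces this functional to vanish identically, and duality yields $f \equiv 0$.

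I do not foresee any serious obstacle; the only piece of bookkeeping is the symmetry $p \in E({\alpha,\beta}) \Leftrightarrow p' \in E({\alpha,\beta})$, which is a direct check from the definitions of $p({\alpha,\beta})$ and $p'({\alpha,\beta})$. A parallel route, should one prefer to avoid invoking Lemma~\ref{lem:dens} with the dual exponent, is to split $f = f_{\textrm{even}} + f_{\textrm{odd}}$ and use \eqref{Phi} to reduce the orthogonality conditions to separate vanishing conditions on $(0,\pi)$ against $\phi_n^{\alpha,\beta}$ and $\phi_n^{\alpha+1,\beta+1}$; completeness of the Jacobi function systems in the relevant $L^{p'}(0,\pi)$ (the parameters $\alpha+1,\beta+1$ lying in the easy range $\ge -1/2$) then closes the argument.
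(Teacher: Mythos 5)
Your argument is correct and is essentially the paper's own proof: the statement is reduced, via the duality $(L^{p'}(-\pi,\pi))^*=L^{p}(-\pi,\pi)$, to the density of $\mathbb{S}_{\alpha,\beta}$ in $L^{p'}(-\pi,\pi)$ provided by Lemma~\ref{lem:dens}. Your explicit check that $p\in E({\alpha,\beta})$ implies $p'\in E({\alpha,\beta})$ (hence $1\le p'<p({\alpha,\beta})$ and $p'<\infty$) is exactly the bookkeeping the paper leaves implicit, so no further changes are needed.
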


\begin{proof}
It is enough to observe that the lemma holds for $f \in \mathbb{S}_{\alpha,\beta}$ and then use the density of
$\mathbb{S}_{\alpha,\beta}$ (see Lemma~\ref{lem:dens}) in the dual space $(L^p(-\pi,\pi))^* = L^{p'}(-\pi,\pi)$.
\end{proof}

\begin{prop}\label{inj}
Let ${\alpha,\beta}>-1$ and $p\in E({\alpha,\beta})$. For each $\sigma>0$, $\mathbb{L}_{\alpha,\beta}^{-\sigma}$, $\alpha+\beta \neq -1$,
and $(\id + \mathbb{L}_{\alpha,\beta})^{-\sigma}$ are injective on $L^p(-\pi,\pi)$.
\end{prop}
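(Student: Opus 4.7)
The plan is to reduce injectivity to the vanishing-of-Fourier-coefficients criterion of Lemma~\ref{lem:wsp}. The crucial spectral fact is that on the finite-dimensional dense subspace $\mathbb{S}_{\alpha,\beta}$, the operator $\mathbb{L}_{\alpha,\beta}^{-\sigma}$ acts diagonally via multiplication by the strictly positive factors $(\lambda_{\langle n\rangle}^{\alpha,\beta})^{-\sigma}$. The positivity uses $\lambda_0^{\alpha,\beta}=A_{\alpha,\beta}^2>0$, which is where the hypothesis $\alpha+\beta\neq -1$ enters; in the Bessel case the eigenvalues $(1+\lambda_{\langle n\rangle}^{\alpha,\beta})^{-\sigma}$ are trivially positive, so no such restriction is needed.

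First I would check that the pairings $\langle f,\Phi_n^{\alpha,\beta}\rangle$ are meaningful for $f\in L^p(-\pi,\pi)$ with $p\in E({\alpha,\beta})$. The condition $p>p'({\alpha,\beta})$ translates, by taking Hölder conjugates, into $p'<p({\alpha,\beta})$, so every $\Phi_n^{\alpha,\beta}$ belongs to $L^{p'}(-\pi,\pi)$ and Hölder's inequality applies. Next, applying Proposition~\ref{ogr} with $q=p$ (valid since $p<p({\alpha,\beta})$ and trivially $1/p\ge 1/p-2\sigma$) shows that both $\mathbb{L}_{\alpha,\beta}^{-\sigma}$ and $(\id+\mathbb{L}_{\alpha,\beta})^{-\sigma}$ extend to bounded operators on $L^p(-\pi,\pi)$.

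The core of the argument is the identity
\begin{gather*}
\langle \mathbb{L}_{\alpha,\beta}^{-\sigma} f, \Phi_n^{\alpha,\beta}\rangle
 = \big(\lambda_{\langle n\rangle}^{\alpha,\beta}\big)^{-\sigma}\,\langle f,\Phi_n^{\alpha,\beta}\rangle,
 \qquad f\in L^p(-\pi,\pi),\; n\ge 0.
\end{gather*}
For $g\in\mathbb{S}_{\alpha,\beta}$ this is immediate from the spectral formula. To extend it to a general $f\in L^p(-\pi,\pi)$, I would invoke Lemma~\ref{lem:dens} to choose $g_k\in\mathbb{S}_{\alpha,\beta}$ with $g_k\to f$ in $L^p$. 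The $L^p$-boundedness just established yields $\mathbb{L}_{\alpha,\beta}^{-\sigma} g_k\to\mathbb{L}_{\alpha,\beta}^{-\sigma} f$ in $L^p$, while Hölder's inequality (with $\Phi_n^{\alpha,\beta}\in L^{p'}$) allows passage to the limit on both sides.

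With this identity in hand, the assumption $\mathbb{L}_{\alpha,\beta}^{-\sigma} f=0$ forces $(\lambda_{\langle n\rangle}^{\alpha,\beta})^{-\sigma}\langle f,\Phi_n^{\alpha,\beta}\rangle=0$ for every $n\ge 0$, and since the eigenvalues are strictly positive, all Fourier coefficients of $f$ vanish; Lemma~\ref{lem:wsp} then yields $f\equiv 0$. The Bessel case is identical, replacing $(\lambda_{\langle n\rangle}^{\alpha,\beta})^{-\sigma}$ by $(1+\lambda_{\langle n\rangle}^{\alpha,\beta})^{-\sigma}$. The only mildly delicate point is the identification of a suitable target space for the $L^p$-boundedness of the potential operators, and the choice $q=p$ resolves it cleanly; the rest is routine approximation.
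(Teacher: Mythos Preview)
Your argument is correct and follows essentially the same route as the paper's proof: establish the identity $\langle \mathbb{L}_{\alpha,\beta}^{-\sigma} f, \Phi_n^{\alpha,\beta}\rangle = (\lambda_{\langle n\rangle}^{\alpha,\beta})^{-\sigma}\langle f,\Phi_n^{\alpha,\beta}\rangle$ on $\mathbb{S}_{\alpha,\beta}$, extend it to $L^p$ by density (Lemma~\ref{lem:dens}) and the $L^p$-boundedness from Proposition~\ref{ogr}, then invoke Lemma~\ref{lem:wsp}. One small slip: $\mathbb{S}_{\alpha,\beta}$ is not finite-dimensional (it is the linear span of infinitely many $\Phi_n^{\alpha,\beta}$, so each element is a finite combination but the space itself is infinite-dimensional); this does not affect the argument.
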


\begin{proof}
We focus ourselves on $\mathbb{L}_{\alpha,\beta}^{-\sigma}$ and
essentially repeat the arguments from the proof of \cite[Proposition~1]{betancor}.
Notice that for $f \in \mathbb{S}_{\alpha,\beta}$
\begin{gather} \label{id}
\langle\mathbb{L}_{\alpha,\beta}^{-\sigma}f, \Phi_n^{\alpha,\beta}\rangle =
	\big(\lambda_{\langle n\rangle}^{\alpha,\beta}\big)^{-\sigma} \langle f, \Phi_n^{\alpha,\beta}\rangle, \qquad n \ge 0.
\end{gather}
By H\"older's inequality and the $L^p$-boundedness of $\mathbb{L}_{\alpha,\beta}^{-\sigma}$
(see Proposition \ref{ogr}), the functionals
\begin{gather*}
f \mapsto \langle\mathbb{L}_{\alpha,\beta}^{-\sigma}f, \Phi_n^{\alpha,\beta}\rangle \qquad \textrm{and} \qquad
f \mapsto \langle f, \Phi_n^{\alpha,\beta}\rangle
\end{gather*}
are bounded from $L^p(-\pi,\pi)$ to $\mathbb{C}$. Since $\mathbb{S}_{\alpha,\beta}$ is dense in $L^p(-\pi,\pi)$,
we infer that \eqref{id} holds for $f \in L^p(-\pi,\pi)$. Now, if $\mathbb{L}_{\alpha,\beta}^{-\sigma}f \equiv 0$
for some $f \in L^p(-\pi,\pi)$, then \eqref{id} implies $\langle f, \Phi_n^{\alpha,\beta}\rangle=0$ for
all $n \ge 0$ and hence Lemma~\ref{lem:wsp} gives $f \equiv 0$.
Thus $\mathbb{L}_{\alpha,\beta}^{-\sigma}$ is one-to-one on $L^p(-\pi,\pi)$.
\end{proof}

Now we can def\/ine the Jacobi potential spaces as the ranges of the potential operators on~$L^p(-\pi,\pi)$,
\begin{gather*}
\mathcal{L}_{\alpha,\beta}^{p,s}(-\pi,\pi) := \begin{cases}
															\mathbb{L}_{\alpha,\beta}^{-s/2}\big( L^p(-\pi,\pi)\big), & \alpha+\beta \neq -1,\\
															(\id+\mathbb{L}_{\alpha,\beta})^{-s/2}\big( L^p(-\pi,\pi)\big), & \alpha+\beta = -1,
														\end{cases}												
\end{gather*}
where $p \in E({\alpha,\beta})$ and $s > 0$.
Then the formula
\begin{gather*}
\|f\|_{\mathcal{L}_{\alpha,\beta}^{p,s}(-\pi,\pi)} := \|g \|_{L^p(-\pi,\pi)}, \qquad
																		\begin{cases}
																			 f=\mathbb{L}^{-s/2}_{\alpha,\beta}g, & g \in L^p(-\pi,\pi),
																			 		\ \ \alpha+\beta \neq -1,\\																																	 f = (\id+\mathbb{L}_{\alpha,\beta})^{-s/2}g, & g \in L^p(-\pi,\pi),
																			 		\ \ \alpha+\beta = -1,
																		\end{cases}
\end{gather*}
def\/ines a norm on $\mathcal{L}_{\alpha,\beta}^{p,s}(-\pi,\pi)$ and it is straightforward to check that
$\mathcal{L}_{\alpha,\beta}^{p,s}(-\pi,\pi)$ equipped with this norm is a Banach space.

In order to give a suitable def\/inition of Sobolev spaces in the symmetrized setting we need to
understand the structure of the potential spaces. The following result describes the symmetrized potential
spaces in terms of the potential spaces related to the initial, non-symmetrized situation.
The latter spaces are def\/ined similarly as $\mathcal{L}_{\alpha,\beta}^{p,s}(-\pi,\pi)$, see \cite{L2,L3} for details.

\begin{prop}\label{pot}
Let ${\alpha,\beta}>-1$, $p\in E({\alpha,\beta})$ and  $s>0$. Then
$f\in\mathcal{L}_{\alpha,\beta}^{p,s}(-\pi,\pi)$ if and only if $f_{\rm even}^+\in \mathcal{L}_{\alpha,\beta}^{p,s}(0,\pi)$
and $f_{\rm odd}^+\in \mathcal{L}_{\alpha+1,\beta+1}^{p,s}(0,\pi)$. Moreover,
\begin{gather} \label{por}
\|f\|_{\mathcal{L}_{\alpha,\beta}^{p,s}(-\pi,\pi)}\simeq 		
\|f_{{\rm even}}^+\|_{\mathcal{L}_{\alpha,\beta}^{p,s}(0,\pi)}
	+\|f_{{\rm odd}}^+\|_{\mathcal{L}_{\alpha+1,\beta+1}^{p,s}(0,\pi)}.
\end{gather}
\end{prop}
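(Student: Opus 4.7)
The plan is to reduce the proposition to the non-symmetrized setting by exploiting the even/odd decomposition \eqref{dec} combined with the basis identification \eqref{Phi}. As already observed in the proof of Proposition \ref{ogr}, the restricted operators $(\mathbb{L}_{\alpha,\beta}^{-s/2})_{\textrm{e}}^{+}$ and $(\mathbb{L}_{\alpha,\beta}^{-s/2})_{\textrm{o}}^{+}$ coincide, up to the factor $1/2$, with the non-symmetrized Riesz type potentials $L_{\alpha,\beta}^{-s/2}$ and $L_{\alpha+1,\beta+1}^{-s/2}$ acting on $L^{2}(0,\pi)$ (and similarly in the Bessel case). This algebraic identity is the crux of both implications.

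For the forward direction, I would take $f=\mathbb{L}_{\alpha,\beta}^{-s/2}g$ with $g\in L^{p}(-\pi,\pi)$ and use \eqref{dec} together with the above identification to write, on $(0,\pi)$,
\begin{gather*}
f_{\textrm{even}}^{+}=\frac{1}{2}L_{\alpha,\beta}^{-s/2}g_{\textrm{even}}^{+},\qquad f_{\textrm{odd}}^{+}=\frac{1}{2}L_{\alpha+1,\beta+1}^{-s/2}g_{\textrm{odd}}^{+}.
\end{gather*}
This places $f_{\textrm{even}}^{+}$ in $\mathcal{L}_{\alpha,\beta}^{p,s}(0,\pi)$ and $f_{\textrm{odd}}^{+}$ in $\mathcal{L}_{\alpha+1,\beta+1}^{p,s}(0,\pi)$, and the $\lesssim$ direction of \eqref{por} follows from $\|g\|_{p}\simeq\|g_{\textrm{even}}^{+}\|_{p}+\|g_{\textrm{odd}}^{+}\|_{p}$. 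Conversely, given $h_{\textrm{e}},h_{\textrm{o}}\in L^{p}(0,\pi)$ producing $f_{\textrm{even}}^{+}=L_{\alpha,\beta}^{-s/2}h_{\textrm{e}}$ and $f_{\textrm{odd}}^{+}=L_{\alpha+1,\beta+1}^{-s/2}h_{\textrm{o}}$, I would let $g\in L^{p}(-\pi,\pi)$ be the function whose even part equals the even extension of $2h_{\textrm{e}}$ and whose odd part equals the odd extension of $2h_{\textrm{o}}$; comparing Fourier coefficients against $\Phi_{n}^{\alpha,\beta}$ and invoking Lemma \ref{lem:wsp} then shows $\mathbb{L}_{\alpha,\beta}^{-s/2}g=f$, and the injectivity from Proposition \ref{inj} guarantees $g$ is the unique pre-image. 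The $\gtrsim$ direction of \eqref{por} is then immediate.

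The one point requiring extra care is the exceptional case $\alpha+\beta=-1$, where both $\mathcal{L}_{\alpha,\beta}^{p,s}(-\pi,\pi)$ and $\mathcal{L}_{\alpha,\beta}^{p,s}(0,\pi)$ are defined through Bessel rather than Riesz potentials, while $\mathcal{L}_{\alpha+1,\beta+1}^{p,s}(0,\pi)$ remains Riesz-based (since $(\alpha+1)+(\beta+1)\neq-1$). The even-part reduction is still a perfect match, but the odd-part reduction identifies $f_{\textrm{odd}}^{+}$ with a Bessel type potential of $L^{p}$-data for $L_{\alpha+1,\beta+1}$, whereas the right-hand side of \eqref{por} is phrased through $L_{\alpha+1,\beta+1}^{-s/2}$. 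Since $\lambda_{0}^{\alpha+1,\beta+1}=((\alpha+\beta+3)/2)^{2}>0$, a standard bounded-multiplier argument shows that the Bessel and Riesz type norms associated with $L_{\alpha+1,\beta+1}$ are equivalent on $L^{p}(0,\pi)$, reconciling the two formulations. This compatibility step between Bessel and Riesz potentials, rather than the main algebraic reduction, is what I expect to be the principal technical obstacle.
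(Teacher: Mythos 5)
Your proposal is correct and follows essentially the same route as the paper's proof: reduce via the even/odd decomposition \eqref{dec} and the identification \eqref{Phi} to the non-symmetrized potentials $L_{\alpha,\beta}^{-s/2}$ and $L_{\alpha+1,\beta+1}^{-s/2}$, and reverse the construction by even/odd extension of the $L^p(0,\pi)$ data. Two small remarks: the exact identity is $f_{\textrm{even}}^+=L_{\alpha,\beta}^{-s/2}g_{\textrm{even}}^+$ with no factor $1/2$, since $\langle g_{\textrm{even}},\Phi_{2n}^{\alpha,\beta}\rangle=2\langle g_{\textrm{even}}^+,(\Phi_{2n}^{\alpha,\beta})^+\rangle_+$ cancels the two normalizing factors $1/\sqrt{2}$ (harmless for \eqref{por}), and the paper first argues for $g\in L^2\cap L^p(-\pi,\pi)$ and then uses density plus the $L^p$-boundedness from Proposition~\ref{ogr} to justify applying \eqref{dec} to general $L^p$ data, while it leaves the $\alpha+\beta=-1$ case (your Bessel/Riesz reconciliation) to the reader.
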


\begin{proof}
We assume that $\alpha+\beta \neq -1$, the opposite case requires only minor modif\/ications which are left to the
reader. Let $f \in \mathcal{L}_{\alpha,\beta}^{p,s}(-\pi,\pi)$. This means that there is $g \in L^p(-\pi,\pi)$
such that $f = \mathbb{L}_{\alpha,\beta}^{-s/2}g$; then
$\|f\|_{\mathcal{L}_{\alpha,\beta}^{p,s}(-\pi,\pi)}=\|g\|_{L^p(-\pi,\pi)}$.

Assume, for the time being, that $g$ belongs also to $L^2(-\pi,\pi)$. Then from \eqref{dec} we see that
\begin{gather*}
f_{\textrm{even}} = \big( \mathbb{L}_{\alpha,\beta}^{-s/2}\big)_{\textrm{e}}  g_{\textrm{even}}, \qquad
f_{\textrm{odd}} = \big( \mathbb{L}_{\alpha,\beta}^{-s/2}\big)_{\textrm{o}}  g_{\textrm{odd}}.
\end{gather*}
Thus
\begin{gather*}
f_{\textrm{even}}^+  = \sum_{n=0}^{\infty} \big(\lambda_{n}^{\alpha,\beta}\big)^{-s/2}
	\langle g_{\textrm{even}}, \Phi_{2n}^{\alpha,\beta} \rangle \big(\Phi_{2n}^{\alpha,\beta}\big)^+
 = 2 \sum_{n=0}^{\infty} \big(\lambda_{n}^{\alpha,\beta}\big)^{-s/2}
	\big\langle g_{\textrm{even}}^+, \big(\Phi_{2n}^{\alpha,\beta}\big)^+ \big\rangle_+ \big(\Phi_{2n}^{\alpha,\beta}\big)^+ \\
\hphantom{f_{\textrm{even}}^+}{} = L_{\alpha,\beta}^{-s/2} g_{\textrm{even}}^+,	
\end{gather*}
where the last identity is a consequence of~\eqref{Phi}.
Similarly, $f_{\textrm{odd}}^+ = L_{\alpha+1,\beta+1}^{-s/2} g_{\textrm{odd}}^+$.

A general $g \in L^p(-\pi,\pi)$ can be approximated in the $L^p$ norm by functions from
$L^p \cap L^2(-\pi,\pi)$. Then combining the above with the $L^p(-\pi,\pi)$-boundedness of
$\mathbb{L}_{\alpha,\beta}^{-s/2}$ and $L^p(0,\pi)$-boundedness of $L_{\alpha,\beta}^{-s/2}$ and $L_{\alpha+1,\beta+1}^{-s/2}$,
we get
\begin{gather*}
f_{\textrm{even}}^+ = L_{\alpha,\beta}^{-s/2} g_{\textrm{even}}^+, \qquad
f_{\textrm{odd}}^+ = L_{\alpha+1,\beta+1}^{-s/2} g_{\textrm{odd}}^+,
\end{gather*}
in the general case. Since
\begin{gather*}
\|g\|_{L^p(-\pi,\pi)} \simeq \|g_{\textrm{even}}^+\|_{L^p(0,\pi)} + \|g_{\textrm{odd}}^+\|_{L^p(0,\pi)},
	\qquad g \in L^p(-\pi,\pi),
\end{gather*}
we see that $f^+_{\textrm{even}} \in \mathcal{L}_{\alpha,\beta}^{p,s}(0,\pi)$,
$f^+_{\textrm{odd}} \in \mathcal{L}_{\alpha+1,\beta+1}^{p,s}(0,\pi)$ and, moreover, \eqref{por} holds.

The opposite implication is verif\/ied along similar lines. Given a function $f$ on $(-\pi,\pi)$,
assume that $f^+_{\textrm{even}} \in \mathcal{L}_{\alpha,\beta}^{p,s}(0,\pi)$ and
$f^+_{\textrm{odd}} \in \mathcal{L}_{\alpha+1,\beta+1}^{p,s}(0,\pi)$. Then
$f^+_{\textrm{even}} = L_{\alpha,\beta}^{-s/2}h$ and $f^+_{\textrm{odd}} = L_{\alpha+1,\beta+1}^{-s/2}\widetilde{h}$ for
some $h,\widetilde{h} \in L^p(0,\pi)$. Extending $h$ and $\widetilde{h}$ to even and odd functions on
$(-\pi,\pi)$, respectively, we let $g$ be the sum of these extensions. Then we f\/ind that
$f^+_{\textrm{even}} = (\mathbb{L}_{\alpha,\beta}^{-s/2}g)_{\textrm{even}}^+$,
$f^+_{\textrm{odd}} = (\mathbb{L}_{\alpha,\beta}^{-s/2}g)_{\textrm{odd}}^+$, and consequently
$f = \mathbb{L}_{\alpha,\beta}^{-s/2}g$ with $g \in L^p(-\pi,\pi)$. Thus $f\in \mathcal{L}_{\alpha,\beta}^{p,s}(-\pi,\pi)$.
\end{proof}

\section{Sobolev spaces} \label{sec:sob}

Our aim in this section is to establish a suitable def\/inition of Sobolev spaces in the symmetrized setting.
Here ``suitable'' means existence of an isomorphism between the Sobolev spaces and the potential spaces
with properly chosen parameters. Note that such an isomorphism gives also a characterization of the potential
spaces with some parameters in terms of appropriate higher-order `derivatives'.

According to a general concept, Sobolev spaces $\mathbb{W}_{\alpha,\beta}^{p,m}$, $m \ge 1$ integer, associated with
$\mathbb{L}_{\alpha,\beta}$, should be def\/ined by
\begin{gather*}
\mathbb{W}_{\alpha,\beta}^{p,m} := \big\{ f \in L^p(-\pi,\pi)\colon \mathfrak{D}_{\alpha,\beta}^{(k)}f \in L^p(-\pi,\pi),\,
	k=1,\ldots,m \big\}
\end{gather*}
and equipped with the norm
\begin{gather*}
\|f\|_{\mathbb{W}_{\alpha,\beta}^{p,m}} := \sum_{k=0}^{m} \big\|\mathfrak{D}_{\alpha,\beta}^{(k)}f\big\|_{L^p(-\pi,\pi)}.
\end{gather*}
Here $\mathfrak{D}_{\alpha,\beta}^{(k)}$ is a suitably def\/ined dif\/ferential-dif\/ference operator of order~$k$
playing the role of higher-order derivative, with the dif\/ferentiation understood in a weak sense; we use the convention $\mathfrak{D}_{\alpha,\beta}^{(0)}:=\id$.
So the question is how to choose $\mathfrak{D}_{\alpha,\beta}^{(k)}$.

It turns out that the seemingly most natural choice $\mathfrak{D}_{\alpha,\beta}^{(k)} = \mathbb{D}_{\alpha,\beta}^k$ is
not appropriate. Another quite natural attempt is to mimic the variable index derivatives, which lead
to a good def\/inition of Sobolev spaces in the non-symmetrized setting, see \cite[Section~2]{L2}.
Unfortunately, taking $\mathfrak{D}_{\alpha,\beta}^{(k)} = \mathbb{D}_{\alpha+k-1,\beta+k-1}\circ \cdots\circ
\mathbb{D}_{\alpha+1,\beta+1} \circ \mathbb{D}_{\alpha,\beta}$ is inappropriate as well. Counterexamples for these choices are
discussed at the end of this section.

To f\/ind suitable higher-order `derivatives' in the symmetrized setting we f\/irst introduce the variable index
higher-order `derivatives'
\begin{gather*}
\mathfrak{d}_{\alpha,\beta}^{(k)} := D_{\alpha+k-1,\beta+k-1} \circ \cdots \circ D_{\alpha+1,\beta+1} \circ D_{\alpha,\beta},
\qquad k \ge 1,
\end{gather*}
acting on functions on $(-\pi,\pi)$ or $(0,\pi)$; we set $\mathfrak{d}_{\alpha,\beta}^{(0)}:=\id$.
In \cite[Theorem A]{L2} we proved that in the non-symmetrized Jacobi function setting the Sobolev spaces
\begin{gather*}
W_{\alpha,\beta}^{p,m}(0,\pi) =
	\big\{ h \in L^p(0,\pi) \colon  \mathfrak{d}_{\alpha,\beta}^{(k)}h \in L^p(0,\pi), \, k=1,\ldots,m \big\}
\end{gather*}
equipped with the norm
\begin{gather*}
\|h\|_{W_{\alpha,\beta}^{p,m}(0,\pi)} = \sum_{k=0}^m \big\|\mathfrak{d}_{\alpha,\beta}^{(k)}h\big\|_{L^p(0,\pi)},
\end{gather*}
are isomorphic to the potential spaces $\mathcal{L}_{\alpha,\beta}^{p,m}(0,\pi)$.
Combining this result with Proposition~\ref{pot} we get the following.
\begin{prop}\label{sob}
Let ${\alpha,\beta}>-1$, $p\in E({\alpha,\beta})$ and $m\ge1$ be integer. Then
$f\in\mathcal{L}_{\alpha,\beta}^{p,m}(-\pi,\pi)$ if and only if
$f_{\rm even}^+\in W_{\alpha,\beta}^{p,m}(0,\pi)$ and $f_{\rm odd}^+\in W_{\alpha+1,\beta+1}^{p,m}(0,\pi)$.
Moreover,
\begin{gather*}
\|f\|_{\mathcal{L}_{\alpha,\beta}^{p,m}(-\pi,\pi)}\simeq
 \|f_{{\rm even}}^+\|_{W_{\alpha,\beta}^{p,m}(0,\pi)}+\|f_{{\rm odd}}^+\|_{W_{\alpha+1,\beta+1}^{p,m}(0,\pi)}.
\end{gather*}
\end{prop}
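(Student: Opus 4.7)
The proof essentially reduces to composing two isomorphisms, so the plan is to identify the two ingredients carefully and check that the parameter ranges are compatible.

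First, I would apply Proposition \ref{pot} with the specialization $s = m$. This immediately yields the equivalence of $f \in \mathcal{L}_{\alpha,\beta}^{p,m}(-\pi,\pi)$ with the pair of conditions $f_{\text{even}}^+ \in \mathcal{L}_{\alpha,\beta}^{p,m}(0,\pi)$ and $f_{\text{odd}}^+ \in \mathcal{L}_{\alpha+1,\beta+1}^{p,m}(0,\pi)$, together with the norm equivalence
\begin{gather*}
\|f\|_{\mathcal{L}_{\alpha,\beta}^{p,m}(-\pi,\pi)} \simeq
\|f_{\text{even}}^+\|_{\mathcal{L}_{\alpha,\beta}^{p,m}(0,\pi)}
+ \|f_{\text{odd}}^+\|_{\mathcal{L}_{\alpha+1,\beta+1}^{p,m}(0,\pi)}.
\end{gather*}

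Next, I would invoke \cite[Theorem A]{L2}, which states that in the non-symmetrized Jacobi function setting on $(0,\pi)$ the potential space $\mathcal{L}_{\gamma,\delta}^{p,m}(0,\pi)$ is isomorphic to the Sobolev space $W_{\gamma,\delta}^{p,m}(0,\pi)$ (with equivalent norms) whenever $p \in E(\gamma,\delta)$ and $m$ is a positive integer. Applying this with $(\gamma,\delta) = (\alpha,\beta)$ to the even part and $(\gamma,\delta) = (\alpha+1,\beta+1)$ to the odd part, and then substituting the resulting norm equivalences into the display above, would deliver the claim.

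The only subtlety is a parameter check. To apply \cite[Theorem A]{L2} with the shifted indices $(\alpha+1,\beta+1)$ we need $p \in E(\alpha+1,\beta+1)$. Since $\alpha+1,\beta+1 > 0 > -1/2$, we have $p(\alpha+1,\beta+1) = \infty$ and $p'(\alpha+1,\beta+1) = 1$, so $E(\alpha+1,\beta+1) = (1,\infty)$. Because $E(\alpha,\beta) \subseteq (1,\infty)$ always, the hypothesis $p \in E(\alpha,\beta)$ is strong enough for both applications; no extra restriction arises.

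I do not expect any genuine obstacle here: the substance of the argument is already packaged in Proposition \ref{pot} and in the non-symmetrized Sobolev-potential isomorphism from \cite{L2}. The only thing worth writing out is the two-line verification that $p \in E(\alpha,\beta)$ covers the shifted case, after which the norm equivalence is a direct concatenation of the two previously established equivalences.
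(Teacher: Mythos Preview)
Your proposal is correct and matches the paper's approach exactly: the paper states that Proposition~\ref{sob} follows by ``combining this result [\cite[Theorem~A]{L2}] with Proposition~\ref{pot},'' which is precisely what you do. Your explicit verification that $p\in E(\alpha,\beta)$ forces $p\in E(\alpha+1,\beta+1)$ is a useful detail the paper leaves implicit.
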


This motivates the following def\/inition of the higher-order `derivatives' $\mathfrak{D}_{\alpha,\beta}^{(k)}$.
\begin{defin} \label{def}
For k=0,1,2,\ldots, let
\begin{gather*}
\mathfrak{D}_{\alpha,\beta}^{(k)} f := \mathfrak{d}_{\alpha,\beta}^{(k)} f_{{\rm even}}
			+\mathfrak{d}_{\alpha+1,\beta+1}^{(k)} f_{{\rm odd}}.
\end{gather*}
\end{defin}

Note that the `derivatives' $\mathfrak{D}_{\alpha,\beta}^{(k)}$ are counterintuitive from the point of view of
the symmetrization concept, since they do not express via compositions of the symmetrized `derivative'~$\mathbb{D}_{\alpha,\beta}$. Nevertheless, we have the following result.

\begin{thm} \label{thm:sob1}
Let ${\alpha,\beta}>-1$, $p \in E({\alpha,\beta})$ and $m \ge 1$ be integer. Then
\begin{gather*}
\mathbb{W}_{\alpha,\beta}^{p,m} = \mathcal{L}_{\alpha,\beta}^{p,m}(-\pi,\pi)
\end{gather*}
in the sense of isomorphism of Banach spaces.
\end{thm}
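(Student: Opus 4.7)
The plan is to reduce the claimed identity to Proposition~\ref{sob} via a parity decomposition, exploiting the fact that $\mathfrak{D}_{\alpha,\beta}^{(k)}$ was tailored precisely to separate the action of the variable-index derivatives on the even and odd parts of $f$.

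First I would verify the parity behaviour of $\mathfrak{d}_{\alpha,\beta}^{(k)}$. Each factor $D_{\alpha+j,\beta+j}$ is built from $\frac{d}{d\theta}$, $\cot(\theta/2)$ and $\tan(\theta/2)$, all of which are odd functions (or parity-reversing operators). Hence a single application of any $D_{\alpha+j,\beta+j}$ flips parity, and consequently $\mathfrak{d}_{\alpha,\beta}^{(k)}f_{\textrm{even}}$ and $\mathfrak{d}_{\alpha+1,\beta+1}^{(k)}f_{\textrm{odd}}$ have opposite parities for every $k \ge 0$. Therefore Definition~\ref{def} is literally the even/odd decomposition of $\mathfrak{D}_{\alpha,\beta}^{(k)}f$, so
\begin{gather*}
\big\|\mathfrak{D}_{\alpha,\beta}^{(k)}f\big\|_{L^p(-\pi,\pi)} \simeq
  \big\|\mathfrak{d}_{\alpha,\beta}^{(k)}f_{\textrm{even}}\big\|_{L^p(-\pi,\pi)}
  + \big\|\mathfrak{d}_{\alpha+1,\beta+1}^{(k)}f_{\textrm{odd}}\big\|_{L^p(-\pi,\pi)}.
\end{gather*}

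Next I would observe that on $(0,\pi)$ the differential-difference expressions for $D_{\alpha,\beta}$ given on $(-\pi,\pi)$ by \eqref{der_ini} coincide with the corresponding non-symmetrized operators on $(0,\pi)$; in particular $(\mathfrak{d}_{\alpha,\beta}^{(k)}g)^{+} = \mathfrak{d}_{\alpha,\beta}^{(k)}g^{+}$ whenever $g$ has definite parity. Since each of $\mathfrak{d}_{\alpha,\beta}^{(k)}f_{\textrm{even}}$ and $\mathfrak{d}_{\alpha+1,\beta+1}^{(k)}f_{\textrm{odd}}$ has definite parity, their $L^p(-\pi,\pi)$ norms are, up to the factor $2^{1/p}$, equal to the $L^p(0,\pi)$ norms of their restrictions to the positive half-interval. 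Summing the resulting equivalences over $k=0,1,\ldots,m$ yields
\begin{gather*}
\|f\|_{\mathbb{W}_{\alpha,\beta}^{p,m}} \simeq
  \|f_{\textrm{even}}^{+}\|_{W_{\alpha,\beta}^{p,m}(0,\pi)}
  + \|f_{\textrm{odd}}^{+}\|_{W_{\alpha+1,\beta+1}^{p,m}(0,\pi)},
\end{gather*}
and in particular $f\in\mathbb{W}_{\alpha,\beta}^{p,m}$ iff $f_{\textrm{even}}^{+}\in W_{\alpha,\beta}^{p,m}(0,\pi)$ and $f_{\textrm{odd}}^{+}\in W_{\alpha+1,\beta+1}^{p,m}(0,\pi)$.

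Finally I would invoke Proposition~\ref{sob}, which characterises $\mathcal{L}_{\alpha,\beta}^{p,m}(-\pi,\pi)$ in exactly the same way and with an equivalent norm. Combining the two characterisations gives both the set equality $\mathbb{W}_{\alpha,\beta}^{p,m} = \mathcal{L}_{\alpha,\beta}^{p,m}(-\pi,\pi)$ and the equivalence of norms, hence the desired isomorphism of Banach spaces. I do not foresee any genuine obstacle here; the whole point is that Definition~\ref{def} was engineered to make the parity decomposition work, so the proof is essentially bookkeeping once the parity of $\mathfrak{d}_{\alpha,\beta}^{(k)}$ is noted. The only mildly delicate point is making sure that the coefficient functions in $D_{\alpha,\beta}$ on $(-\pi,\pi)$ restrict cleanly to those on $(0,\pi)$, which is immediate from their formulas.
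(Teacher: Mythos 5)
Your argument is correct and is essentially the paper's own proof: the paper likewise decomposes $\mathfrak{D}_{\alpha,\beta}^{(k)}f$ by parity to get $\big\|\mathfrak{D}_{\alpha,\beta}^{(k)}f\big\|_{L^p(-\pi,\pi)}\simeq\big\|\mathfrak{d}_{\alpha,\beta}^{(k)}f_{\textrm{even}}^+\big\|_{L^p(0,\pi)}+\big\|\mathfrak{d}_{\alpha+1,\beta+1}^{(k)}f_{\textrm{odd}}^+\big\|_{L^p(0,\pi)}$ and then invokes Proposition~\ref{sob}. You merely spell out the ``symmetry reasons'' (the parity-flipping of each $D_{\alpha+j,\beta+j}$ and the restriction to $(0,\pi)$) that the paper leaves implicit.
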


\begin{proof}
Let $k \ge 0$.
For symmetry reasons, we have
$\mathfrak{D}_{\alpha,\beta}^{(k)} f\in L^p(-\pi,\pi)$ if and only if
$\mathfrak{d}_{\alpha,\beta}^{(k)} f_{\textrm{even}}^+\in L^p(0,\pi)$
and $\mathfrak{d}_{\alpha+1,\beta+1}^{(k)} f_{\textrm{odd}}^+\in L^p(0,\pi)$. Furthermore,
\begin{gather*}
\big\|\mathfrak{D}_{\alpha,\beta}^{(k)}f\big\|_{L^p(-\pi,\pi)}\simeq
	\big\|\mathfrak{d}_{\alpha,\beta}^{(k)}f_{\textrm{even}}^+\big\|_{L^p(0,\pi)}
	+\big\|\mathfrak{d}_{\alpha+1,\beta+1}^{(k)}f_{\textrm{odd}}^+\big\|_{L^p(0,\pi)}.
\end{gather*}
Thus the assertion follows from Proposition \ref{sob}.
\end{proof}

In the remaining part of this section we look closer at the two already mentioned, seemingly more natural
concepts of Sobolev spaces in the symmetrized setting, which in general fail to be isomorphic with the
corresponding potential spaces. For $m \ge 1$ denote
\begin{gather*}
\mathcal{W}_{\alpha,\beta}^{p,m}  := \big\{ f \in L^p(-\pi,\pi) \colon \mathbb{D}_{\alpha,\beta}^k f \in L^p(-\pi,\pi), \,
	k=1,\ldots,m\big\}, \\
\mathfrak{W}_{\alpha,\beta}^{p,m}  := \big\{ f \in L^p(-\pi,\pi) \colon
	\mathbb{D}_{\alpha+k-1,\beta+k-1} \cdots \mathbb{D}_{\alpha+1,\beta+1}\mathbb{D}_{\alpha,\beta} f
	\in L^p(-\pi,\pi),\, k=1,\ldots,m\big\},
\end{gather*}
and equip these spaces with the natural norms.

\begin{prop} \label{prop:neg}
Let ${\alpha,\beta} > -1$ and $p \in E({\alpha,\beta})$. For every ${\alpha,\beta} < 1/p-1/2$ there exists
$f \in \mathcal{W}_{\alpha,\beta}^{p,1} = \mathfrak{W}_{\alpha,\beta}^{p,1}$ such that
$f \notin \mathcal{L}_{\alpha,\beta}^{p,1}(-\pi,\pi)$.
Furthermore, if $\alpha \le -1/p+1/2$ or $\beta \le -1/p+1/2$ then there exists
$g\in \mathcal{L}_{\alpha,\beta}^{p,2}(-\pi,\pi)$ such that $g\notin \mathfrak{W}_{\alpha,\beta}^{p,2}$.
\end{prop}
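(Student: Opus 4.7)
The plan is to reduce to the non-symmetrized setting via Proposition~\ref{sob} together with the parity identities $\mathbb{D}_{\alpha,\beta}F=D_{\alpha,\beta}F$ for $F$ even and $\mathbb{D}_{\alpha,\beta}F=-D^*_{\alpha,\beta}F$ for $F$ odd. Since $D_{\alpha,\beta}$ is exactly the derivative underlying $W^{p,1}_{\alpha,\beta}(0,\pi)$, even functions introduce no first-order mismatch, and the discrepancy between $\mathfrak{W}^{p,1}_{\alpha,\beta}$ and $\mathcal{L}^{p,1}_{\alpha,\beta}(-\pi,\pi)$ is entirely concentrated in the odd part, where $\mathfrak{W}$ uses $D^*_{\alpha,\beta}$ while the potential space requires $D_{\alpha+1,\beta+1}$. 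Both counterexamples are accordingly built from functions annihilated by one of these operators.

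For the first claim I take $f(\theta)=\sign(\theta)\Psi^{-\alpha-1,-\beta-1}(\theta)$ on $(-\pi,\pi)$. Integrating the ODE $D^*_{\alpha,\beta}F=0$ on $(0,\pi)$ gives $F\propto\Psi^{-\alpha-1,-\beta-1}$, so $\mathbb{D}_{\alpha,\beta}f\equiv 0$ and $f\in\mathfrak{W}^{p,1}_{\alpha,\beta}=\mathcal{W}^{p,1}_{\alpha,\beta}$ trivially. Boundary asymptotics $\Psi^{-\alpha-1,-\beta-1}(\theta)\sim c\,\theta^{-\alpha-1/2}$ near $0$ and $\sim c'(\pi-\theta)^{-\beta-1/2}$ near $\pi$ pin down $f\in L^p(-\pi,\pi)$ to the hypothesised range $\alpha,\beta<1/p-1/2$. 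A direct Leibniz computation yields
\begin{gather*}
D_{\alpha+1,\beta+1}\Psi^{-\alpha-1,-\beta-1}=\Psi^{-\alpha-1,-\beta-1}\bigl[-(\alpha+1)\cot(\theta/2)+(\beta+1)\tan(\theta/2)\bigr],
\end{gather*}
whose leading term at $\theta=0$ is $-2(\alpha+1)c\,\theta^{-\alpha-3/2}$ with nonzero coefficient (as $\alpha>-1$). The crucial observation is that the explicit formula $p'({\alpha,\beta})=1/(\min({\alpha,\beta})+3/2)$ (when $\min({\alpha,\beta})<-1/2$) forces $p\in E({\alpha,\beta})$ to imply $\alpha,\beta>1/p-3/2$; hence $\theta^{-\alpha-3/2}\notin L^p(0,\pi)$, so $f^+_{\mathrm{odd}}\notin W^{p,1}_{\alpha+1,\beta+1}(0,\pi)$ and Proposition~\ref{sob} gives $f\notin\mathcal{L}^{p,1}_{\alpha,\beta}(-\pi,\pi)$.

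For the second claim I take $g(\theta)=\sign(\theta)\phi_0^{\alpha+1,\beta+1}(\theta)$ (equivalently, $g=\sqrt{2}\,\Phi_1^{\alpha,\beta}$ by~\eqref{Phi}). Since $\phi_0^{\alpha+1,\beta+1}\propto\Psi^{\alpha+1,\beta+1}$ is annihilated by $D_{\alpha+1,\beta+1}$, all iterated variable-index derivatives $\mathfrak{d}_{\alpha+1,\beta+1}^{(k)}\phi_0^{\alpha+1,\beta+1}$ vanish, so Proposition~\ref{sob} puts $g\in\mathcal{L}^{p,2}_{\alpha,\beta}(-\pi,\pi)$ for free. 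For $g$ odd one has $\mathbb{D}_{\alpha+1,\beta+1}\mathbb{D}_{\alpha,\beta}g=-D_{\alpha+1,\beta+1}D^*_{\alpha,\beta}g$ on $(0,\pi)$, and the same calculation as in Part~1 (with parameters shifted by $1$) gives
\begin{gather*}
D^*_{\alpha,\beta}\phi_0^{\alpha+1,\beta+1}=\phi_0^{\alpha+1,\beta+1}\cdot K,\qquad K=-(\alpha+1)\cot(\theta/2)+(\beta+1)\tan(\theta/2).
\end{gather*}
Applying the Leibniz rule and using $D_{\alpha+1,\beta+1}\phi_0^{\alpha+1,\beta+1}=0$ collapses $D_{\alpha+1,\beta+1}D^*_{\alpha,\beta}\phi_0^{\alpha+1,\beta+1}$ to $\phi_0^{\alpha+1,\beta+1}\cdot K'$. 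Since $K'\sim 2(\alpha+1)/\theta^2$ near $\theta=0$ and $\phi_0^{\alpha+1,\beta+1}\sim c\,\theta^{\alpha+3/2}$, the product behaves like $c\,\theta^{\alpha-1/2}$ near $0$, failing to lie in $L^p(0,\pi)$ precisely when $\alpha\le -1/p+1/2$. The analogous analysis at $\theta=\pi$ (where $K'\sim 2(\beta+1)/(\pi-\theta)^2$) yields the corresponding condition on $\beta$; under either hypothesis $g\notin\mathfrak{W}^{p,2}_{\alpha,\beta}$.

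The only non-routine ingredient is the observation that $p\in E({\alpha,\beta})$ automatically forces $\alpha,\beta>1/p-3/2$, without which the $\Psi$-counterexample in Part~1 would leave a gap at the lower end of the admissible parameter range. Everything else is asymptotic bookkeeping, together with the fact that $\alpha,\beta>-1$ prevents accidental vanishing of the singular coefficients produced by the Leibniz rule.
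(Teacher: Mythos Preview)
Your proof is correct and follows essentially the same approach as the paper: the same two odd counterexamples $f=\sign(\theta)\Psi^{-\alpha-1,-\beta-1}$ and $g=\sign(\theta)\Psi^{\alpha+1,\beta+1}$ (your $\phi_0^{\alpha+1,\beta+1}$ is a constant multiple of the latter), the same parity reduction $\mathbb{D}_{\alpha,\beta}F=-D^*_{\alpha,\beta}F$ for $F$ odd, and the same appeal to the $W^{p,m}_{\alpha,\beta}(0,\pi)$ characterization (Proposition~\ref{sob}). The paper cites \cite[formulas~(8) and~(9)]{L2} for the derivative computations and invokes Theorem~\ref{thm:sob1}, whereas you carry out the Leibniz-rule calculations by hand and go directly through Proposition~\ref{sob}; your explicit verification that $p\in E({\alpha,\beta})$ forces $\alpha,\beta>1/p-3/2$ is exactly what the paper leaves implicit in the phrase ``in view of the assumption $p\in E({\alpha,\beta})$''.
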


\begin{proof}
Let
\begin{gather*}
f(\theta) = \sign(\theta)\Psi^{-\alpha-1,-\beta-1}(\theta) = \sign(\theta)
		 \left|\sin\frac{\theta}2\right|^{-\alpha-1/2}\left(\cos\frac{\theta}2\right)^{-\beta-1/2}.
\end{gather*}
Since $f$ is odd, we have $(\mathbb{D}_{\alpha,\beta}f)^+=D_{\alpha,\beta}^*f^+$ and \cite[formula~(9)]{L2}
reveals that $D_{\alpha,\beta}^*f^+$ vanishes. Therefore $f^+,D_{\alpha,\beta}^{*}f^+ \in L^p(0,\pi)$ for ${\alpha,\beta}$ in question.
By symmetry it follows that $f \in \mathcal{W}_{\alpha,\beta}^{p,1}$.

On the other hand, with the aid of \cite[formula~(8)]{L2} we f\/ind that
\begin{gather*}
\big(\mathfrak{D}_{\alpha,\beta}^{(1)}f\big)^+(\theta) = \big(\mathfrak{d}_{\alpha+1,\beta+1}^{(1)}f\big)^+(\theta)
\gtrsim \theta^{-\alpha-3/2}(\pi-\theta)^{-\beta-3/2}, \qquad \theta \in (0,\pi).
\end{gather*}
Thus $(\mathfrak{D}_{\alpha,\beta}^{(1)}f)^+ \notin L^p(0,\pi)$, in view of the assumption $p \in E({\alpha,\beta})$.
This implies $f \notin \mathbb{W}_{\alpha,\beta}^{p,1}$. By Theorem \ref{thm:sob1},
$f \notin \mathcal{L}_{\alpha,\beta}^{p,1}(-\pi,\pi)$.

To prove the second claim let
\begin{gather*}
g(\theta) = \sign(\theta)\Psi^{\alpha+1,\beta+1}(\theta) = \sign(\theta)
		 \left|\sin\frac{\theta}2\right|^{\alpha+3/2}\left(\cos\frac{\theta}2\right)^{\beta+3/2}.
\end{gather*}
Since $g$ is odd, we have
$\big(\mathbb{D}_{\alpha+1,\beta+1}\mathbb{D}_{\alpha,\beta}g\big)^+=D_{\alpha+1,\beta+1}D_{\alpha,\beta}^*g^+$.
Using \cite[formulas~(8) and~(9)]{L2} we get
\begin{gather*}
(\mathbb{D}_{\alpha+1,\beta+1}\mathbb{D}_{\alpha,\beta}g)^+(\theta)
	\gtrsim \theta^{\alpha-1/2}(\pi-\theta)^{\beta-1/2}, \qquad \theta \in (0,\pi).
\end{gather*}
Consequently, for the assumed range of ${\alpha,\beta}$ we have that $(\mathbb{D}_{\alpha+1,\beta+1}\mathbb{D}_{\alpha,\beta}g)^+$ is
not in $L^p(0,\pi)$. Therefore, by symmetry, $g\notin \mathfrak{W}_{\alpha,\beta}^{p,2}$.

On the other hand $\big(\mathfrak{D}_{\alpha,\beta}^{(1)}g\big)^+ =
 \big(\mathfrak{d}_{\alpha+1,\beta+1}^{(1)}g\big)^+=D_{\alpha+1,\beta+1}g^+ =0$,
by \cite[formula~(8)]{L2}. Consequently,
$\big(\mathfrak{D}_{\alpha,\beta}^{(2)}g\big)^+ =D_{\alpha+2,\beta+2}D_{\alpha+1,\beta+1}g^+ =0$.
Since $g^+ \in L^p(0,\pi)$, using again the symmetry we see that $g\in \mathbb{W}_{\alpha,\beta}^{p,2}$.
This together with Theorem~\ref{thm:sob1} implies $g\in \mathcal{L}_{\alpha,\beta}^{p,2}(-\pi,\pi)$.
\end{proof}

Although Proposition~\ref{prop:neg} shows that the spaces $\mathcal{W}_{\alpha,\beta}^{p,m}$ and
$\mathcal{L}_{\alpha,\beta}^{p,m}(-\pi,\pi)$ do not coincide in general, one might still wonder what the relation
between them is, if any. The answer is given by the next result.

\begin{prop} \label{prop:incl}
Let ${\alpha,\beta} > -1$, $p \in E({\alpha,\beta})$ and $m \ge 1$ be integer. Then
\begin{gather*}
\mathcal{L}_{\alpha,\beta}^{p,m}(-\pi,\pi) \subset \mathcal{W}_{\alpha,\beta}^{p,m}
\end{gather*}
in the sense of embedding of Banach spaces.
\end{prop}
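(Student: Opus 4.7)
The plan is to control $\mathbb{D}_{\alpha,\beta}^{k} f$ in $L^p$ for every $k$ with $0 \le k \le m$ by realising it as a bounded operator applied to the $L^p$-function defining $f$ as a potential. Assuming for the moment $\alpha+\beta \neq -1$, write $f = \mathbb{L}_{\alpha,\beta}^{-m/2} g$ with $g \in L^p(-\pi,\pi)$ and $\|g\|_{L^p} = \|f\|_{\mathcal{L}_{\alpha,\beta}^{p,m}(-\pi,\pi)}$; it then suffices to prove that $\mathbb{D}_{\alpha,\beta}^{k} \mathbb{L}_{\alpha,\beta}^{-m/2}$ extends to a bounded operator on $L^p(-\pi,\pi)$, since this yields $\|\mathbb{D}_{\alpha,\beta}^{k} f\|_{L^p} \lesssim \|f\|_{\mathcal{L}_{\alpha,\beta}^{p,m}}$ and hence the embedding.

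The key ingredient is the operator identity $\mathbb{D}_{\alpha,\beta}^{2} = A_{\alpha,\beta}^{2}\id - \mathbb{L}_{\alpha,\beta}$, which is a direct reformulation of $\mathbb{L}_{\alpha,\beta}=-\mathbb{D}_{\alpha,\beta}^{2}+A_{\alpha,\beta}^{2}$ and is trivially verified on the basis $\{\Phi_n^{\alpha,\beta}\}$. Iterating it and using the binomial theorem on $\mathbb{S}_{\alpha,\beta}$ (which is dense in $L^p(-\pi,\pi)$ by Lemma~\ref{lem:dens}) gives, for even $k=2j\le m$,
\begin{gather*}
\mathbb{D}_{\alpha,\beta}^{2j}\mathbb{L}_{\alpha,\beta}^{-m/2}
= \sum_{i=0}^{j}\binom{j}{i}(-1)^{i} A_{\alpha,\beta}^{2(j-i)}\,
\mathbb{L}_{\alpha,\beta}^{-(m-2i)/2},
\end{gather*}
a finite sum of Riesz-type potentials of non-negative orders, each $L^p$-bounded by Proposition~\ref{ogr}. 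For odd $k=2j+1\le m$, factoring out $\mathbb{D}_{\alpha,\beta}\mathbb{L}_{\alpha,\beta}^{-1/2}$ produces
\begin{gather*}
\mathbb{D}_{\alpha,\beta}^{2j+1}\mathbb{L}_{\alpha,\beta}^{-m/2}
= \bigl(\mathbb{D}_{\alpha,\beta}\mathbb{L}_{\alpha,\beta}^{-1/2}\bigr)
\sum_{i=0}^{j}\binom{j}{i}(-1)^{i} A_{\alpha,\beta}^{2(j-i)}\,
\mathbb{L}_{\alpha,\beta}^{-(m-2i-1)/2},
\end{gather*}
where all exponents are again non-negative and the leading factor is the symmetrized Riesz transform associated with $\mathbb{L}_{\alpha,\beta}$.

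These identities hold on $\mathbb{S}_{\alpha,\beta}$ by finite spectral manipulation and extend to $L^p(-\pi,\pi)$ by approximating $g \in L^p$ by a sequence $g_n\in\mathbb{S}_{\alpha,\beta}$ and using the $L^p$-continuity of every factor on the right-hand side (the passage to the limit is legitimate because $\mathbb{D}_{\alpha,\beta}^{k}$ commutes with $L^p$-convergence in the sense of distributions on $(-\pi,\pi)\setminus\{0\}$). The exceptional case $\alpha+\beta=-1$, where $A_{\alpha,\beta}=0$ and $\mathbb{D}_{\alpha,\beta}^{2}=-\mathbb{L}_{\alpha,\beta}$, is handled along the same lines after substituting $\mathbb{L}_{\alpha,\beta}=(\id+\mathbb{L}_{\alpha,\beta})-\id$ and invoking the Bessel variants $(\id+\mathbb{L}_{\alpha,\beta})^{-\sigma}$ and $\mathbb{D}_{\alpha,\beta}(\id+\mathbb{L}_{\alpha,\beta})^{-1/2}$. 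The main obstacle is precisely the $L^p$-boundedness of the symmetrized Riesz transform $\mathbb{D}_{\alpha,\beta}\mathbb{L}_{\alpha,\beta}^{-1/2}$ for $p\in E(\alpha,\beta)$, which is supplied as a non-trivial input from~\cite{L1}; granted this, everything else is routine operator calculus together with Proposition~\ref{ogr}.
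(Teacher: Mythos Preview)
Your argument is correct and takes a genuinely different route from the paper's. The paper factors $\mathbb{D}_{\alpha,\beta}^{k}\mathbb{L}_{\alpha,\beta}^{-m/2}=\mathbb{R}_{\alpha,\beta}^{k}\mathbb{L}_{\alpha,\beta}^{-(m-k)/2}$ and proves, as a separate lemma (Lemma~\ref{riesz}), that the higher-order Riesz transforms $\mathbb{R}_{\alpha,\beta}^{k}=\mathbb{D}_{\alpha,\beta}^{k}\mathbb{L}_{\alpha,\beta}^{-k/2}$ are $L^p$-bounded for every $k\ge1$; that lemma is established by decomposing into even and odd parts and invoking Muckenhoupt's multiplier--transplantation theorem. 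You instead exploit the algebraic identity $\mathbb{D}_{\alpha,\beta}^{2}=A_{\alpha,\beta}^{2}\id-\mathbb{L}_{\alpha,\beta}$ to rewrite $\mathbb{D}_{\alpha,\beta}^{k}\mathbb{L}_{\alpha,\beta}^{-m/2}$ as a finite linear combination of Riesz potentials, composed (when $k$ is odd) with a single factor of the first-order Riesz transform. Your approach is more economical for this particular proposition, since it needs only the $k=1$ case of Lemma~\ref{riesz}; the paper's approach, on the other hand, yields the $L^p$-boundedness of all $\mathbb{R}_{\alpha,\beta}^{k}$ as a standalone result. One minor point: the reference~\cite{L1} treats the symmetrized Jacobi \emph{polynomial} setting rather than the Jacobi \emph{function} setting considered here, so the first-order Riesz bound you need is not literally contained there; it is, however, precisely the $k=1$ case of Lemma~\ref{riesz} in the present paper (and follows by the same even/odd reduction to~\cite{L2} and Muckenhoupt's theorem).
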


The proof of Proposition \ref{prop:incl} involves higher-order Riesz transforms of the following form.
For $k \ge 1$ integer, let
\begin{gather*}
\mathbb{R}_{\alpha,\beta}^{k}=
	\begin{cases}
		\mathbb{D}_{\alpha,\beta}^{k}\mathbb{L}_{\alpha,\beta}^{-k/2}, &  \alpha+\beta\neq-1,\\
		\mathbb{D}_{\alpha,\beta}^{k}(\id+\mathbb{L}_{\alpha,\beta})^{-k/2},& \alpha+\beta=-1.
	\end{cases}
\end{gather*}
Clearly, $\mathbb{R}_{\alpha,\beta}^k$ is well def\/ined on~$\mathbb{S}_{\alpha,\beta}$. But we also need to know that each~$\mathbb{R}_{\alpha,\beta}^k$, $k \ge 1$, extends to a bounded operator on $L^p(-\pi,\pi)$.

\begin{lem}\label{riesz}
Let ${\alpha,\beta} > -1$, $p \in E({\alpha,\beta})$ and $k\ge 1$. Then the operator
\begin{gather*}
f\mapsto \mathbb{R}_{\alpha,\beta}^k f,\qquad f\in \mathbb{S}_{\alpha,\beta},
\end{gather*}
extends uniquely to a bounded linear operator on $L^p(-\pi,\pi)$.
\end{lem}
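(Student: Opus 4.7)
The plan is to reduce everything to the base case $k=1$ by exploiting the algebraic identity
\[
\mathbb{D}_{\alpha,\beta}^2 = A_{\alpha,\beta}^2\,\id - \mathbb{L}_{\alpha,\beta}
\]
on $\mathbb{S}_{\alpha,\beta}$, which is immediate from $\mathbb{L}_{\alpha,\beta} = -\mathbb{D}_{\alpha,\beta}^2 + A_{\alpha,\beta}^2$ together with the fact that each $\Phi_n^{\alpha,\beta}$ is an eigenfunction of $\mathbb{L}_{\alpha,\beta}$. In the case $\alpha+\beta=-1$ the parallel identity $\mathbb{D}_{\alpha,\beta}^2 = (A_{\alpha,\beta}^2+1)\id - (\id+\mathbb{L}_{\alpha,\beta})$ does the same job.

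For even $k=2j$, binomial expansion on $\mathbb{S}_{\alpha,\beta}$ yields
\[
\mathbb{D}_{\alpha,\beta}^{2j}\mathbb{L}_{\alpha,\beta}^{-j} = \big(A_{\alpha,\beta}^2\id - \mathbb{L}_{\alpha,\beta}\big)^j\mathbb{L}_{\alpha,\beta}^{-j} = \sum_{i=0}^{j}\binom{j}{i}(-1)^{j-i}A_{\alpha,\beta}^{2i}\mathbb{L}_{\alpha,\beta}^{-i},
\]
a finite linear combination of potential operators each bounded on $L^p(-\pi,\pi)$ by Proposition~\ref{ogr}. For odd $k=2j+1$, the same expansion (factoring one $\mathbb{D}_{\alpha,\beta}\mathbb{L}_{\alpha,\beta}^{-1/2}$ to the left) produces
\[
\mathbb{D}_{\alpha,\beta}^{2j+1}\mathbb{L}_{\alpha,\beta}^{-j-1/2} = \mathbb{R}_{\alpha,\beta}^1\sum_{i=0}^{j}\binom{j}{i}(-1)^{j-i}A_{\alpha,\beta}^{2i}\mathbb{L}_{\alpha,\beta}^{-i},
\]
so the assertion reduces to the boundedness of $\mathbb{R}_{\alpha,\beta}^1 = \mathbb{D}_{\alpha,\beta}\mathbb{L}_{\alpha,\beta}^{-1/2}$ on $L^p(-\pi,\pi)$.

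For the first-order case I would split $f = f_{\textrm{even}} + f_{\textrm{odd}}$ and use three facts: $\mathbb{L}_{\alpha,\beta}^{-1/2}$ preserves parity (see~\eqref{dec}); $\mathbb{D}_{\alpha,\beta}$ coincides with $D_{\alpha,\beta}$ on even functions and with $-D_{\alpha,\beta}^*$ on odd ones; and $\|F\|_p \simeq \|F_{\textrm{even}}^+\|_p + \|F_{\textrm{odd}}^+\|_p$. Invoking the identifications from the proof of Proposition~\ref{ogr} (which rest on \eqref{Phi} and $\lambda_{n+1}^{\alpha,\beta} = \lambda_n^{\alpha+1,\beta+1}$), the problem on $(-\pi,\pi)$ reduces to simultaneous $L^p(0,\pi)$-boundedness of the two non-symmetrized operators $D_{\alpha,\beta}L_{\alpha,\beta}^{-1/2}$ (acting on even parts) and $D_{\alpha,\beta}^*L_{\alpha+1,\beta+1}^{-1/2}$ (acting on odd parts).

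The first operator is the standard first-order Jacobi Riesz transform, whose $L^p(0,\pi)$-boundedness for $p \in E({\alpha,\beta})$ is available from \cite{L2,L3,Nowak&Roncal}. For the second, a short eigenfunction computation using $\lambda_{n+1}^{\alpha,\beta} = \lambda_n^{\alpha+1,\beta+1}$ shows $D_{\alpha,\beta}^*L_{\alpha+1,\beta+1}^{-1/2} = L_{\alpha,\beta}^{-1/2}D_{\alpha,\beta}^* = \big(D_{\alpha,\beta}L_{\alpha,\beta}^{-1/2}\big)^*$ on the span, which is therefore bounded on $L^{p'}(0,\pi)$; since $E({\alpha,\beta})$ is invariant under $p \mapsto p'$, this transfers to $L^p(0,\pi)$-boundedness. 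Uniqueness of the extension then follows from the density of $\mathbb{S}_{\alpha,\beta}$ in $L^p(-\pi,\pi)$ (Lemma~\ref{lem:dens}). The main obstacle is the careful bookkeeping at the reduction to $(0,\pi)$, in particular establishing boundedness of the ``dual'' Riesz transform $D_{\alpha,\beta}^*L_{\alpha+1,\beta+1}^{-1/2}$; once this is secured, the rest is routine binomial algebra plus Proposition~\ref{ogr}.
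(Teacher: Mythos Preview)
Your argument is correct and takes a genuinely different route from the paper's own proof. The paper does not reduce to $k=1$; instead it observes that on even and odd functions $\mathbb{D}_{\alpha,\beta}^{k}$ expands as the alternating compositions $\cdots D_{\alpha,\beta}^{*}D_{\alpha,\beta}$ and $\cdots D_{\alpha,\beta} D_{\alpha,\beta}^{*}$ respectively, so that after restricting to $(0,\pi)$ one is left with the two operators
\[
\underbrace{\cdots D_{\alpha,\beta}^{*}D_{\alpha,\beta}}_{k}\,L_{\alpha,\beta}^{-k/2}
\qquad\text{and}\qquad
\underbrace{\cdots D_{\alpha,\beta} D_{\alpha,\beta}^{*}}_{k}\,L_{\alpha+1,\beta+1}^{-k/2}.
\]
The first is exactly the higher-order Riesz transform $\mathcal{R}_{\alpha,\beta}^{k}$ of \cite[Proposition~4.2]{L2}; for the second the paper computes its action on $\phi_{n}^{\alpha+1,\beta+1}$ explicitly (obtaining a multiplier of the form $(1-A_{\alpha,\beta}^{2}/\lambda_{n+1}^{\alpha,\beta})^{-k/2}$ times a shift to $\phi_{n}^{\alpha+1,\beta+1}$ or $\phi_{n+1}^{\alpha,\beta}$) and then invokes Muckenhoupt's multiplier--transplantation theorem \cite[Lemma~2.1]{L2}.

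Your reduction via $\mathbb{D}_{\alpha,\beta}^{2}=A_{\alpha,\beta}^{2}\id-\mathbb{L}_{\alpha,\beta}$ is cleaner for general $k$: even orders collapse to finite sums of potentials handled by Proposition~\ref{ogr}, and odd orders factor through $\mathbb{R}_{\alpha,\beta}^{1}$. The trade-off is that you must secure the ``dual'' first-order transform $D_{\alpha,\beta}^{*}L_{\alpha+1,\beta+1}^{-1/2}$ separately; your duality argument (using $\lambda_{n+1}^{\alpha,\beta}=\lambda_{n}^{\alpha+1,\beta+1}$ to intertwine, plus the conjugate-symmetry of $E({\alpha,\beta})$) handles this without Muckenhoupt's theorem, whereas the paper treats both restricted operators uniformly with that single heavier tool. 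Either way one leans on the $L^{p}$-boundedness of the non-symmetrized first-order Riesz transform from \cite{L2}.
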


Assuming that this result holds, we now give a short proof of Proposition~\ref{prop:incl}.
Lemma~\ref{riesz} will be shown subsequently.
\begin{proof}[Proof of Proposition \ref{prop:incl}]
We may assume that $\alpha+\beta \neq-1$, since treatment of the opposite case is analogous.
Let $f\in\mathcal{L}_{\alpha,\beta}^{p,m}(-\pi,\pi)$. Then $f=\mathbb{L}_{\alpha,\beta}^{-m/2}g$ for some $g\in L^p(-\pi,\pi).$
By the $L^p$-boundedness of $\mathbb{L}_{\alpha,\beta}^{-(m-k)/2}$ (see Proposition \ref{ogr}) and Lemma \ref{riesz},
for any $0\le k\le m$ we have
\begin{gather*}
\big\|\mathbb{D}^k_{\alpha,\beta}f\big\|_{L^p(-\pi,\pi)}   =
\big\|\mathbb{D}_{\alpha,\beta}^{k}\mathbb{L}_{\alpha,\beta}^{-m/2}g\big\|_{L^p(-\pi,\pi)}=
\big\|\mathbb{R}_{\alpha,\beta}^{k}\mathbb{L}_{\alpha,\beta}^{-(m-k)/2}g\big\|_{L^p(-\pi,\pi)} \\
 \hphantom{\big\|\mathbb{D}^k_{\alpha,\beta}f\big\|_{L^p(-\pi,\pi)}}{}
  \lesssim \|g\|_{L^p(-\pi,\pi)}= \|f\|_{\mathcal{L}_{\alpha,\beta}^{p,m}(-\pi,\pi)},
\end{gather*}
where $\mathbb{R}_{\alpha,\beta}^k$ stands for the extension provided by Lemma~\ref{riesz} (with the natural
interpretation $\mathbb{R}_{\alpha,\beta}^0=\id$). The second identity above is easily justif\/ied when
$g \in \mathbb{S}_{\alpha,\beta}$, and then it carries over to general~$g$ by continuity.
The conclusion follows.
\end{proof}

It remains to prove Lemma \ref{riesz}. The argument relies on a multiplier-transplantation theorem due
to Muckenhoupt~\cite{M}, see \cite[Lemma~2.1]{L2}. Here we merely sketch the proof, leaving the details to
interested readers.
\begin{proof}[{Proof of Lemma~\ref{riesz}}]
Assume that $\alpha+\beta \neq -1$ (the opposite case is similar) and take $f \in \mathbb{S}_{\alpha,\beta}$.
We have
\begin{gather*}
\mathbb{R}_{\alpha,\beta}^k f = -(-1)^{\langle k \rangle}
	\underbrace{\cdots D_{\alpha,\beta}^*D_{\alpha,\beta}}_{k\;\textrm{components}}
	 \big(\mathbb{L}_{\alpha,\beta}^{-k/2}f\big)_{\textrm{even}}
+ (-1)^{\langle k \rangle} \underbrace{\cdots D_{\alpha,\beta}D_{\alpha,\beta}^*}_{k\; \textrm{components}}	
	\big(\mathbb{L}_{\alpha,\beta}^{-k/2}f\big)_{\textrm{odd}}.
\end{gather*}
This is the decomposition of $\mathbb{R}_{\alpha,\beta}^k f$ into its even and odd parts, respectively, or vice versa,
depending on whether $k$ is even or odd. Since (see the proof of Proposition~\ref{pot})
\begin{gather*}
\big(\mathbb{L}_{\alpha,\beta}^{-k/2}f\big)_{\textrm{even}}^+ = L_{\alpha,\beta}^{-k/2} f_{\textrm{even}}^+, \qquad
\big(\mathbb{L}_{\alpha,\beta}^{-k/2}f\big)_{\textrm{odd}}^+ = L_{\alpha+1,\beta+1}^{-k/2} f_{\textrm{odd}}^+,
\end{gather*}
it suf\/f\/ices to show the bounds
\begin{gather}
\big\|\underbrace{\cdots D_{\alpha,\beta}^*D_{\alpha,\beta}}_{k\;\textrm{components}} L_{\alpha,\beta}^{-k/2}h\big\|_{L^p(0,\pi)}
	  \lesssim \|h\|_{L^p(0,\pi)}, \qquad h \in \spann\big\{\phi_n^{\alpha,\beta}\colon n\ge 0\big\}, \label{b1}\\
\big\|\underbrace{\cdots D_{\alpha,\beta}D_{\alpha,\beta}^*}_{k\; \textrm{components}} L_{\alpha+1,\beta+1}^{-k/2}h\big\|_{L^p(0,\pi)}
	  \lesssim \|h\|_{L^p(0,\pi)}, \qquad h \in \spann\big\{\phi_n^{\alpha+1,\beta+1}\colon n \ge 0\big\}. \label{b2}
\end{gather}
Here \eqref{b1} is contained in \cite[Proposition~4.2]{L2}, since the underlying operator
coincides with the Riesz transform $\mathcal{R}_{\alpha,\beta}^k$ considered in~\cite{L2}. So it remains to
verify~\eqref{b2}.

Taking into account \cite[formulas~(5) and~(6)]{L2} one f\/inds that
\begin{gather*}
\underbrace{\cdots D_{\alpha,\beta}D_{\alpha,\beta}^*}_{k\; \textrm{components}} L_{\alpha+1,\beta+1}^{-k/2}h =
	\sum_{n=0}^{\infty} \left( 1 - \frac{A_{\alpha,\beta}^2}{\lambda_{n+1}^{\alpha,\beta}}\right)^{-k/2}
	\big\langle h,\phi_n^{\alpha+1,\beta+1}\big\rangle_{+}
	\begin{cases}
		\phi_n^{\alpha+1,\beta+1}, & k \; \textrm{even},\\
		-\phi_{n+1}^{\alpha,\beta}, & k \; \textrm{odd}.
	\end{cases}
\end{gather*}
Now \eqref{b2} follows from a special case of Muckenhoupt's multiplier-transplantation theorem
\cite[Lem\-ma~2.1]{L2}; see, e.g., the proof of \cite[Proposition~3.4]{L2}.
\end{proof}

\section[Characterization of potential spaces via fractional square functions]{Characterization of potential spaces\\ via fractional square functions}

In this section we give necessary and suf\/f\/icient conditions, expressed in terms of suitably def\/ined fractional
square functions, for a function to belong to the potential space $\mathcal{L}_{\alpha,\beta}^{p,s}(-\pi,\pi)$.
For the sake of brevity, we restrict our main attention to the case $\alpha+\beta \neq -1$.
Nevertheless, after a~slight modif\/ication the result is valid also when $\alpha + \beta = -1$.
This issue is discussed at the end of this section.

Let $\{\mathbb{H}_t^{\alpha,\beta}\colon t \ge 0\}$ be the symmetrized Poisson--Jacobi semigroup, i.e., the semigroup
of operators generated by $-\mathbb{L}_{\alpha,\beta}^{1/2}$. In view of the spectral theorem, for
$f \in L^2(-\pi,\pi)$ and $t \ge 0$ we have
\begin{gather*}
\mathbb{H}_t^{\alpha,\beta}f
	= \sum_{n=0}^{\infty} \exp\Big({-}t\sqrt{\lambda_{\langle n \rangle }^{\alpha,\beta}} \Big)
	\langle f, \Phi_n^{\alpha,\beta} \rangle \Phi_n^{\alpha,\beta},
\end{gather*}
the convergence being in $L^2(-\pi,\pi)$.
By means of~\eqref{Phi} and \cite[Estimate~(1)]{L2} one verif\/ies that for $t >0$ the above series
converges in fact pointwise in $(-\pi,\pi){\setminus} \{0\}$ and, moreover, may serve as a pointwise def\/inition of $\mathbb{H}_{t}^{\alpha,\beta}f$ on $(-\pi,\pi){\setminus} \{0\}$, $t > 0$, for $f \in L^p(-\pi,\pi)$, $p > p'({\alpha,\beta})$. In the latter case, the resulting function
$\mathbb{H}_t^{\alpha,\beta}f(\theta)$ is smooth in $(t,\theta) \in (0,\infty) \times [(-\pi,\pi){\setminus} \{0\}]$. There is also
an integral representation of $\mathbb{H}_t^{\alpha,\beta}f$, $t >0$, for $f$ as above, but it will not be needed for
our purposes.

Following Segovia and Wheeden \cite{Seg&Whe} and Betancor et al.~\cite{betsq}, see also~\cite{L3},
we consider the fractional square function
\begin{gather*}
\mathfrak{g}_{\alpha,\beta}^{\gamma,k}f(\theta) = \left( \int_{0}^{\infty} \left| t^{k-\gamma}
	\frac{\partial^k}{\partial t^k} \mathbb{H}_t^{\alpha,\beta}f(\theta)\right|^2 \frac{dt}t \right)^{1/2},
		\qquad \theta \in (-\pi,\pi){\setminus} \{0\},
\end{gather*}
where $0 < \gamma < k$ and $k=1,2,\ldots$.
Notice that $\mathfrak{g}_{\alpha,\beta}^{\gamma,k}f$ is well def\/ined pointwise whenever $f \in L^p(-\pi,\pi)$
and $p > p'({\alpha,\beta})$. An analogue of $\mathfrak{g}_{\alpha,\beta}^{\gamma,k}$ was investigated in the non-symmetrized
Jacobi function setting in~\cite{L3} and denoted by $g_{\alpha,\beta}^{\gamma,k}$ there. Recall that
\begin{gather*}
{g}_{\alpha,\beta}^{\gamma,k}h(\theta) = \left( \int_{0}^{\infty} \left| t^{k-\gamma}
	\frac{\partial^k}{\partial t^k} {H}_t^{\alpha,\beta}h(\theta)\right|^2 \frac{dt}t \right)^{1/2},
		\qquad \theta \in (0,\pi),
\end{gather*}
where $\gamma$ and $k$ are as before, and $\{H_t^{\alpha,\beta}\}$ is the Poisson--Jacobi semigroup. See~\cite{L3}
for more details on $H_t^{\alpha,\beta}$ and $g_{\alpha,\beta}^{\gamma,k}$.

A simple combination of Proposition~\ref{pot} and \cite[Theorem~4.1]{L3} allows us to get the following
description of the symmetrized potential spaces $\mathcal{L}_{\alpha,\beta}^{p,s}(-\pi,\pi)$ in terms of the
non-symmetrized square functions $g_{\alpha,\beta}^{\gamma,k}$.

\begin{prop}\label{sq}
Let ${\alpha,\beta}>-1$ be such that $\alpha+\beta \neq -1$ and let $p\in E({\alpha,\beta})$.
Fix $0 < \gamma < k$ with $k \in \mathbb{N}$. Then $f\in\mathcal{L}_{\alpha,\beta}^{p,\gamma}(-\pi,\pi)$
if and only if $f \in L^p(-\pi,\pi)$ and $g_{\alpha,\beta}^{\gamma,k}f_{{\rm even}}^+, g_{\alpha+1,\beta+1}^{\gamma,k}f_{{\rm odd}}^+ \in L^p(0,\pi)$.
Moreover,
\begin{gather*}
\|f\|_{\mathcal{L}_{\alpha,\beta}^{p,\gamma}(-\pi,\pi)} \simeq
	\big\|g_{\alpha,\beta}^{\gamma,k}f_{{\rm even}}^+\big\|_{L^p(0,\pi)}
	+\big\|g_{\alpha+1,\beta+1}^{\gamma,k}f_{{\rm odd}}^+\big\|_{L^p(0,\pi)},
		\qquad f \in \mathcal{L}_{\alpha,\beta}^{p,\gamma}(-\pi,\pi).
\end{gather*}
\end{prop}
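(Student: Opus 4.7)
The plan is to obtain this characterization as a direct combination of the structural result Proposition~\ref{pot} with the non-symmetrized fractional square function characterization from \cite[Theorem~4.1]{L3}, without having to re-do any hard analysis on the symmetrized Poisson--Jacobi semigroup itself.

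First I would invoke Proposition~\ref{pot} with $s=\gamma$. This reduces the condition $f\in\mathcal{L}_{\alpha,\beta}^{p,\gamma}(-\pi,\pi)$ to the conjunction
\begin{gather*}
f_{\mathrm{even}}^{+}\in\mathcal{L}_{\alpha,\beta}^{p,\gamma}(0,\pi),\qquad
f_{\mathrm{odd}}^{+}\in\mathcal{L}_{\alpha+1,\beta+1}^{p,\gamma}(0,\pi),
\end{gather*}
with
\begin{gather*}
\|f\|_{\mathcal{L}_{\alpha,\beta}^{p,\gamma}(-\pi,\pi)}\simeq
\|f_{\mathrm{even}}^{+}\|_{\mathcal{L}_{\alpha,\beta}^{p,\gamma}(0,\pi)}
+\|f_{\mathrm{odd}}^{+}\|_{\mathcal{L}_{\alpha+1,\beta+1}^{p,\gamma}(0,\pi)}.
\end{gather*}

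Next I would apply \cite[Theorem~4.1]{L3} separately to each of the two pieces. For the even part, with the parameters $(\alpha,\beta)$, the theorem says that $f_{\mathrm{even}}^{+}\in\mathcal{L}_{\alpha,\beta}^{p,\gamma}(0,\pi)$ is equivalent to $f_{\mathrm{even}}^{+}\in L^{p}(0,\pi)$ together with $g_{\alpha,\beta}^{\gamma,k}f_{\mathrm{even}}^{+}\in L^{p}(0,\pi)$, with the natural norm equivalence. For the odd part the same theorem is applied with shifted parameters $(\alpha+1,\beta+1)$. Here one has to check that the hypothesis $p\in E(\alpha,\beta)$ still fits: since $\alpha+1,\beta+1>-1/2$, one has $p(\alpha+1,\beta+1)=\infty$, hence $E(\alpha+1,\beta+1)=(1,\infty)\supseteq E(\alpha,\beta)$, so the shifted characterization is available. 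Combining with the elementary equivalence $\|f\|_{L^{p}(-\pi,\pi)}\simeq\|f_{\mathrm{even}}^{+}\|_{L^{p}(0,\pi)}+\|f_{\mathrm{odd}}^{+}\|_{L^{p}(0,\pi)}$ takes care of the $L^{p}$ condition on $f$ itself.

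Putting the two applications together yields both the equivalence of membership and the norm equivalence asserted in the statement. There is really no substantial obstacle here: Proposition~\ref{pot} has already done the work of reducing the symmetrized potential space to its two non-symmetrized components with parameters $(\alpha,\beta)$ and $(\alpha+1,\beta+1)$, and \cite[Theorem~4.1]{L3} supplies the square function characterization in the non-symmetrized setting. The only points requiring genuine care are the bookkeeping on parameter ranges (ensuring $E(\alpha,\beta)\subseteq E(\alpha+1,\beta+1)$ as above) and the fact that the pointwise definition of $\mathfrak{g}_{\alpha,\beta}^{\gamma,k}f$ on $(-\pi,\pi)\setminus\{0\}$ splits cleanly, via parity of $\Phi_{n}^{\alpha,\beta}$ and formula~\eqref{Phi}, into $g_{\alpha,\beta}^{\gamma,k}f_{\mathrm{even}}^{+}$ on $(0,\pi)$ and $g_{\alpha+1,\beta+1}^{\gamma,k}f_{\mathrm{odd}}^{+}$ on $(0,\pi)$, which one records briefly before quoting the cited theorem.
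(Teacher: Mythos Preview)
Your approach is correct and matches the paper exactly: the paper states the proposition as ``a simple combination of Proposition~\ref{pot} and \cite[Theorem~4.1]{L3}'', and that is precisely what you do, with the added (useful) verification that $p\in E(\alpha,\beta)\subseteq E(\alpha+1,\beta+1)$. One small remark: your final comment about the pointwise splitting of $\mathfrak{g}_{\alpha,\beta}^{\gamma,k}$ is not needed for this proposition, which concerns only the non-symmetrized square functions $g_{\alpha,\beta}^{\gamma,k}$ and $g_{\alpha+1,\beta+1}^{\gamma,k}$; that splitting is the content of the subsequent Theorem~\ref{thm:char}.
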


This leads to the following characterization of the symmetrized potential spaces.

\begin{thm} \label{thm:char}
Let ${\alpha,\beta}>-1$ be such that $\alpha+\beta \neq -1$ and let $p \in E({\alpha,\beta})$. Fix $0<\gamma < k$ with $k \in \mathbb{N}$.
Then $f\in\mathcal{L}_{\alpha,\beta}^{p,\gamma}(-\pi,\pi)$ if and only if $f\in L^p(-\pi,\pi)$ and
$\mathfrak{g}_{\alpha,\beta}^{\gamma,k}f\in L^p(-\pi,\pi)$. Moreover,
\begin{gather*}
  \|f\|_{\mathcal{L}_{\alpha,\beta}^{p,\gamma}(-\pi,\pi)}\simeq
  	\big\|\mathfrak g_{\alpha,\beta}^{\gamma,k}f\big\|_{L^p(-\pi,\pi)},
  		\qquad f \in \mathcal{L}_{\alpha,\beta}^{p,\gamma}(-\pi,\pi).
\end{gather*}
\end{thm}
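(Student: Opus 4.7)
The overall strategy is to reduce to the non-symmetrized characterization already contained in Proposition~\ref{sq}, by splitting $f$ into its even and odd parts and exploiting the symmetries of $\mathbb{H}_t^{\alpha,\beta}$. The key computational input, which drops out of the eigenfunction expansion together with \eqref{Phi} and the identity $\lambda_{n+1}^{\alpha,\beta}=\lambda_n^{\alpha+1,\beta+1}$, is that for $f \in L^p(-\pi,\pi)$ with $p > p'({\alpha,\beta})$ and for $\theta \in (0,\pi)$, $t > 0$,
\begin{gather*}
\mathbb{H}_t^{\alpha,\beta} f_{\textrm{even}}(\theta) = H_t^{\alpha,\beta} f^+_{\textrm{even}}(\theta),
\qquad
\mathbb{H}_t^{\alpha,\beta} f_{\textrm{odd}}(\theta) = H_t^{\alpha+1,\beta+1} f^+_{\textrm{odd}}(\theta).
\end{gather*}
Differentiating $k$ times in $t$ and using evenness/oddness in $\theta$ to extend these identities to $(-\pi,0)$, one obtains immediately that $\mathfrak{g}_{\alpha,\beta}^{\gamma,k} f_{\textrm{even}}$ and $\mathfrak{g}_{\alpha,\beta}^{\gamma,k} f_{\textrm{odd}}$ are even functions on $(-\pi,\pi)\setminus\{0\}$ whose restrictions to $(0,\pi)$ coincide with $g_{\alpha,\beta}^{\gamma,k} f^+_{\textrm{even}}$ and $g_{\alpha+1,\beta+1}^{\gamma,k} f^+_{\textrm{odd}}$, respectively. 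Consequently
\begin{gather*}
\big\|\mathfrak{g}_{\alpha,\beta}^{\gamma,k} f_{\textrm{even}}\big\|_{L^p(-\pi,\pi)}
= 2^{1/p} \big\|g_{\alpha,\beta}^{\gamma,k} f^+_{\textrm{even}}\big\|_{L^p(0,\pi)},
\end{gather*}
and analogously for the odd part, so by Proposition~\ref{sq} the right-hand side of the claimed equivalence is comparable to $\|f\|_{\mathcal{L}_{\alpha,\beta}^{p,\gamma}(-\pi,\pi)}$ once we are allowed to replace $\mathfrak{g}_{\alpha,\beta}^{\gamma,k}f$ by $\mathfrak{g}_{\alpha,\beta}^{\gamma,k} f_{\textrm{even}} + \mathfrak{g}_{\alpha,\beta}^{\gamma,k} f_{\textrm{odd}}$ in $L^p$-norm.

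The remaining point, and the main technical obstacle, is precisely this symmetrization step: relating $\mathfrak{g}^{\gamma,k}_{\alpha,\beta} f$ to the square functions of its even and odd parts. Writing $A(t,\theta)=t^{k-\gamma}\partial_t^k \mathbb{H}_t^{\alpha,\beta} f_{\textrm{even}}(\theta)$ and $B(t,\theta)=t^{k-\gamma}\partial_t^k \mathbb{H}_t^{\alpha,\beta} f_{\textrm{odd}}(\theta)$, linearity of $\mathbb{H}_t^{\alpha,\beta}$ and the fact that $A$ is even in $\theta$ while $B$ is odd give, for $\theta \in (0,\pi)$,
\begin{gather*}
\bigl(\mathfrak{g}^{\gamma,k}_{\alpha,\beta} f(\theta)\bigr)^{2}
+ \bigl(\mathfrak{g}^{\gamma,k}_{\alpha,\beta} f(-\theta)\bigr)^{2}
= \int_0^\infty \bigl(|A+B|^2+|A-B|^2\bigr)\frac{dt}{t}
= 2\bigl(\mathfrak{g}^{\gamma,k}_{\alpha,\beta} f_{\textrm{even}}(\theta)\bigr)^{2}
+ 2\bigl(\mathfrak{g}^{\gamma,k}_{\alpha,\beta} f_{\textrm{odd}}(\theta)\bigr)^{2}.
\end{gather*}
Combining this identity with the pointwise triangle inequality $\mathfrak{g}^{\gamma,k}_{\alpha,\beta} f \le \mathfrak{g}^{\gamma,k}_{\alpha,\beta} f_{\textrm{even}} + \mathfrak{g}^{\gamma,k}_{\alpha,\beta} f_{\textrm{odd}}$ (which follows from Minkowski's inequality in $L^2(dt/t)$) yields
\begin{gather*}
\big\|\mathfrak{g}^{\gamma,k}_{\alpha,\beta} f\big\|_{L^p(-\pi,\pi)}
\simeq \big\|\mathfrak{g}^{\gamma,k}_{\alpha,\beta} f_{\textrm{even}}\big\|_{L^p(-\pi,\pi)}
+ \big\|\mathfrak{g}^{\gamma,k}_{\alpha,\beta} f_{\textrm{odd}}\big\|_{L^p(-\pi,\pi)}
\end{gather*}
for any $1 \le p < \infty$.

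Putting everything together, for $f \in L^p(-\pi,\pi)$ we obtain
\begin{gather*}
\big\|\mathfrak{g}^{\gamma,k}_{\alpha,\beta} f\big\|_{L^p(-\pi,\pi)}
\simeq \big\|g^{\gamma,k}_{\alpha,\beta} f^+_{\textrm{even}}\big\|_{L^p(0,\pi)}
+ \big\|g^{\gamma,k}_{\alpha+1,\beta+1} f^+_{\textrm{odd}}\big\|_{L^p(0,\pi)},
\end{gather*}
and the theorem follows directly from Proposition~\ref{sq}: if $f \in \mathcal{L}^{p,\gamma}_{\alpha,\beta}(-\pi,\pi)$ then in particular $f \in L^p(-\pi,\pi)$ and both non-symmetrized square functions lie in $L^p(0,\pi)$, whence $\mathfrak{g}^{\gamma,k}_{\alpha,\beta} f \in L^p(-\pi,\pi)$; conversely, if $f \in L^p(-\pi,\pi)$ and $\mathfrak{g}^{\gamma,k}_{\alpha,\beta} f \in L^p(-\pi,\pi)$, then the two non-symmetrized square function norms are finite and Proposition~\ref{sq} places $f$ in $\mathcal{L}^{p,\gamma}_{\alpha,\beta}(-\pi,\pi)$ with the stated norm equivalence.
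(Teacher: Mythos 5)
Your proposal is correct and follows essentially the same route as the paper: reduce to Proposition~\ref{sq} via the restriction identities $\big(\mathbb{H}_t^{\alpha,\beta}f_{\textrm{even}}\big)^+=H_t^{\alpha,\beta}f_{\textrm{even}}^+$, $\big(\mathbb{H}_t^{\alpha,\beta}f_{\textrm{odd}}\big)^+=H_t^{\alpha+1,\beta+1}f_{\textrm{odd}}^+$ and a symmetry argument showing $\|\mathfrak{g}_{\alpha,\beta}^{\gamma,k}f\|_{L^p(-\pi,\pi)}\simeq\|\mathfrak{g}_{\alpha,\beta}^{\gamma,k}f_{\textrm{even}}\|_{L^p(-\pi,\pi)}+\|\mathfrak{g}_{\alpha,\beta}^{\gamma,k}f_{\textrm{odd}}\|_{L^p(-\pi,\pi)}$. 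The only cosmetic difference is that you obtain the lower estimate from the parallelogram identity in $L^2(dt/t)$, while the paper gets the same pointwise two-sided bound from sublinearity together with the fact that $\mathbb{H}_t^{\alpha,\beta}$ commutes with reflections, i.e.\ $\mathfrak{g}_{\alpha,\beta}^{\gamma,k}\check{f}(\theta)=\mathfrak{g}_{\alpha,\beta}^{\gamma,k}f(-\theta)$ — the same symmetry input in a different wrapper.
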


\begin{proof}
Taking into account Proposition \ref{sq}, it suf\/f\/ices to show the following. Given $p \in E({\alpha,\beta})$,
\begin{gather*}
\big\| \mathfrak{g}_{\alpha,\beta}^{\gamma,k}f\big\|_{L^p(-\pi,\pi)} \simeq
	\big\|g_{\alpha,\beta}^{\gamma,k}f_{\textrm{even}}^+\big\|_{L^p(0,\pi)} +
		\big\|g_{\alpha+1,\beta+1}^{\gamma,k} f_{\textrm{odd}}^+\big\|_{L^p(0,\pi)},
\end{gather*}
uniformly in $f\in L^p(-\pi,\pi)$, possibly with inf\/inite values on both sides for some $f$.

To proceed, observe that
\begin{gather}
\big( \mathbb{H}_t^{\alpha,\beta} f_{\textrm{even}}\big)^+(\theta)   =
	\big( \mathbb{H}_t^{\alpha,\beta}f \big)_{\textrm{even}}^+(\theta) = H_t^{\alpha,\beta}f_{\textrm{even}}^+(\theta),
		\qquad \theta \in (0,\pi), \label{crse} \\
\big( \mathbb{H}_t^{\alpha,\beta} f_{\textrm{odd}}\big)^+(\theta)   =
	\big( \mathbb{H}_t^{\alpha,\beta}f \big)_{\textrm{odd}}^+(\theta) = H_t^{\alpha+1,\beta+1}f_{\textrm{odd}}^+(\theta),
		\qquad \theta \in (0,\pi). \label{crso}
\end{gather}
These identities are easily verif\/ied by means of the series representations of $\mathbb{H}_t^{\alpha,\beta}$
and $H_t^{\alpha,\beta}$, since the relevant series converge pointwise.

Next, we claim that
\begin{gather} \label{bg}
\mathfrak{g}_{\alpha,\beta}^{\gamma,k}f(\theta) \le \mathfrak{g}_{\alpha,\beta}^{\gamma,k}f_{\textrm{even}}(\theta)
	+ \mathfrak{g}_{\alpha,\beta}^{\gamma,k} f_{\textrm{odd}}(\theta) \le
	\mathfrak{g}_{\alpha,\beta}^{\gamma,k}f(\theta) + \mathfrak{g}_{\alpha,\beta}^{\gamma,k}f(-\theta).
\end{gather}
Here the lower bound is clear, since $\mathfrak{g}_{\alpha,\beta}^{\gamma,k}$ is sublinear.
To see the upper bound, we f\/irst observe that the operators $\mathbb{H}_t^{\alpha,\beta}$, $t>0$, commute with ref\/lections.
It is enough to verify this on $\mathbb{S}_{\alpha,\beta}$. Since $\check{\Phi}_n^{\alpha,\beta} = \Phi_n^{\alpha,\beta}$ for
$n$ even and $\check{\Phi}_n^{\alpha,\beta} = -\Phi_n^{\alpha,\beta}$ for $n$ odd, we can write
\begin{gather*}
\mathbb{H}_t^{\alpha,\beta} \check{\Phi}_{n}^{\alpha,\beta}(\theta)   =
	\exp\Big({-}t\sqrt{\lambda_{\langle n \rangle }^{\alpha,\beta}} \Big) (-1)^n \Phi_{n}^{\alpha,\beta}(\theta)
	= \exp\Big({-}t\sqrt{\lambda_{\langle n \rangle }^{\alpha,\beta}} \Big) \Phi_n^{\alpha,\beta}(-\theta)   = \mathbb{H}_t^{\alpha,\beta} \Phi_n^{\alpha,\beta}(-\theta).
\end{gather*}
Consequently, $\mathfrak{g}_{\alpha,\beta}^{\gamma,k}\check{f}(\theta) = \mathfrak{g}_{\alpha,\beta}^{\gamma,k}f(-\theta)$.
Using now the identities $f_{\textrm{even}} = (f+\check{f})/2$ and $f_{\textrm{odd}} = (f-\check{f})/2$
we arrive at the upper estimate in \eqref{bg}. The claim follows.

Using \eqref{bg} and then \eqref{crse}, \eqref{crso}, together with the symmetries of
$\mathbb{H}_t^{\alpha,\beta}f_{\textrm{even}}$ and $\mathbb{H}_t^{\alpha,\beta}f_{\textrm{odd}}$, we obtain
\begin{gather*}
\big\| \mathfrak{g}_{\alpha,\beta}^{\gamma,k}f\big\|_{L^p(-\pi,\pi)}   \simeq
	\big\|\mathfrak{g}_{\alpha,\beta}^{\gamma,k}f_{\textrm{even}}\big\|_{L^p(-\pi,\pi)}
	+ \big\|\mathfrak{g}_{\alpha,\beta}^{\gamma,k}f_{\textrm{odd}}\big\|_{L^p(-\pi,\pi)} \\
\hphantom{\big\| \mathfrak{g}_{\alpha,\beta}^{\gamma,k}f\big\|_{L^p(-\pi,\pi)} }{}
  \simeq \big\|g_{\alpha,\beta}^{\gamma,k}f_{\textrm{even}}^+\big\|_{L^p(0,\pi)}
	+ \big\|g_{\alpha+1,\beta+1}^{\gamma,k}f_{\textrm{odd}}^+\big\|_{L^p(0,\pi)}.
\end{gather*}
This f\/inishes the proof.
\end{proof}

In the remaining part of this section we deal with the case $\alpha + \beta = -1$, which is not covered by
Theorem \ref{thm:char}. Actually, only a slight modif\/ication is needed, and this is connected with the
fact that for $\alpha+\beta=-1$ the potential spaces are def\/ined via the Bessel type potentials
$(\id + \mathbb{L}_{\alpha,\beta})^{-s/2}$. The main idea of what follows is taken from \cite[Section~4]{L3}.
Here we give only a general outline and state the relevant result. The details consist of a combination
of the facts and results described at the end of \cite[Section~4]{L3} and the arguments already used
in this section. This is left to interested readers.

Consider the modif\/ied symmetrized Jacobi `Laplacian'
\begin{gather*}
\widetilde{\mathbb{L}}_{\alpha,\beta}:=\big(\id + \mathbb{L}_{\alpha,\beta}^{1/2}\big)^2
\end{gather*}
and the related modif\/ied Riesz type potentials $\widetilde{\mathbb{L}}_{\alpha,\beta}^{-s/2}$.
Since the spectrum of $\widetilde{\mathbb{L}}_{\alpha,\beta}$ is separated from $0$,
the latter operators are well def\/ined spectrally. Moreover, they
extend to bounded operators on $L^p(-\pi,\pi)$,
$p \in E({\alpha,\beta})$. Since these extensions are one-to-one on $L^p(-\pi,\pi)$, one can def\/ine for $p \in E({\alpha,\beta})$
the modif\/ied potential spaces as
\begin{gather*}
\widetilde{\mathcal{L}}_{\alpha,\beta}^{p,s}(-\pi,\pi) :=
 \widetilde{\mathbb{L}}_{\alpha,\beta}^{-s/2}\big(L^p(-\pi,\pi)\big),
\end{gather*}
with the norm $\|f\|_{\widetilde{\mathcal{L}}_{\alpha,\beta}^{p,s}(-\pi,\pi)} := \|g\|_{L^p(-\pi,\pi)}$, where $g$ is such that
$f = \widetilde{\mathbb{L}}_{\alpha,\beta}^{-s/2}g$. These are Banach spaces, and the crucial fact is that they are
isomorphic to the non-modif\/ied potential spaces $\mathcal{L}_{\alpha,\beta}^{p,s}(-\pi,\pi)$.

The Poisson semigroup related to $\widetilde{\mathbb{L}}_{\alpha,\beta}$ is generated by
$-\id-\mathbb{L}_{\alpha,\beta}^{1/2}$, hence it has the form $\{e^{-t}\mathbb{H}_t^{\alpha,\beta}\}$.
Consequently, the relevant square function is given by
\begin{gather*}
\widetilde{\mathfrak g}_{\alpha,\beta}^{\,\gamma,k}f(\theta) =\left(\int_0^{\infty}\left|t^{k-\gamma}
	\frac{\partial^k}{\partial t^k} \big[e^{-t}\mathbb{H}_t^{\alpha,\beta}f(\theta)\big]\right|^2\frac{dt}{t}\right)^{1/2},
		\qquad \theta \in (-\pi,\pi) {\setminus} \{0\},
\end{gather*}
where $0 < \gamma < k$ and $k=1,2,\ldots$.

The desired alternative characterization of $\mathcal{L}_{\alpha,\beta}^{p,s}(-\pi,\pi)$ reads as follows.
\begin{thm} \label{equivfun'}
Let ${\alpha,\beta} > -1$ and $p\in E({\alpha,\beta})$. Fix $0 < \gamma < k$ with $k \in \mathbb{N}$.
Then $f\in\mathcal{L}_{\alpha,\beta}^{p,\gamma}(-\pi,\pi)$ if and only if $f\in L^p(-\pi,\pi)$ and
$\widetilde{\mathfrak{g}}_{\alpha,\beta}^{\,\gamma,k}f\in L^p(-\pi,\pi)$. Moreover,
\begin{gather*}
\|f\|_{\mathcal{L}_{\alpha,\beta}^{p,\gamma}(-\pi,\pi)} \simeq
	\big\|\widetilde{\mathfrak{g}}_{\alpha,\beta}^{\,\gamma,k}f\big\|_{L^p(-\pi,\pi)},
		\qquad f\in \mathcal{L}_{\alpha,\beta}^{p,\gamma}(-\pi,\pi).
\end{gather*}
\end{thm}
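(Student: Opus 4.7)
The plan is to parallel the proof of Theorem~\ref{thm:char}, substituting the Bessel-type object $\widetilde{\mathbb{L}}_{\alpha,\beta} = (\id + \mathbb{L}_{\alpha,\beta}^{1/2})^2$ for $\mathbb{L}_{\alpha,\beta}$ and the damped semigroup $\{e^{-t}\mathbb{H}_t^{\alpha,\beta}\}$ for $\{\mathbb{H}_t^{\alpha,\beta}\}$ throughout. First I would establish the modified analogue of Proposition~\ref{pot}: namely, $f \in \widetilde{\mathcal{L}}_{\alpha,\beta}^{p,s}(-\pi,\pi)$ if and only if $f^+_{\rm even}$ and $f^+_{\rm odd}$ lie in the corresponding modified non-symmetrized potential spaces at parameters $(\alpha,\beta)$ and $(\alpha+1,\beta+1)$, respectively, with comparable norms. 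The argument copies that of Proposition~\ref{pot} verbatim: the spectral multiplier $(1+\sqrt{\lambda_{\langle n\rangle}^{\alpha,\beta}})^{-s}$ still respects the even/odd decomposition \eqref{dec}, and via~\eqref{Phi} together with $\lambda_{n+1}^{\alpha,\beta} = \lambda_n^{\alpha+1,\beta+1}$ it matches the non-symmetrized modified Riesz-type potential on each parity.

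Next I would invoke, for each of the parameter pairs $(\alpha,\beta)$ and $(\alpha+1,\beta+1)$, the non-symmetrized modified-square-function characterization of the associated Jacobi potential spaces described at the end of \cite[Section~4]{L3}. Denoting the corresponding non-symmetrized square function built from $\{e^{-t}H_t^{\alpha,\beta}\}$ by $\widetilde{g}_{\alpha,\beta}^{\gamma,k}$, this combined with the reduction above gives the modified analogue of Proposition~\ref{sq}, namely
\begin{gather*}
\|f\|_{\mathcal{L}_{\alpha,\beta}^{p,\gamma}(-\pi,\pi)} \simeq \big\|\widetilde{g}_{\alpha,\beta}^{\gamma,k}f^+_{\rm even}\big\|_{L^p(0,\pi)} + \big\|\widetilde{g}_{\alpha+1,\beta+1}^{\gamma,k}f^+_{\rm odd}\big\|_{L^p(0,\pi)},
\end{gather*}
valid uniformly in $f \in L^p(-\pi,\pi)$, possibly with both sides infinite.

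The remaining task is to transfer from these non-symmetrized square functions back to $\widetilde{\mathfrak{g}}_{\alpha,\beta}^{\,\gamma,k}$, and here I would essentially reproduce the final portion of the proof of Theorem~\ref{thm:char}. The scalar factor $e^{-t}$ does not disturb the key structural features: the operators $e^{-t}\mathbb{H}_t^{\alpha,\beta}$ still commute with the reflection $\check{\,\cdot\,}$ (because $\mathbb{H}_t^{\alpha,\beta}$ does, on the basis $\{\Phi_n^{\alpha,\beta}\}$), so sublinearity yields the two-sided comparison
\begin{gather*}
\widetilde{\mathfrak{g}}_{\alpha,\beta}^{\,\gamma,k}f(\theta) \le \widetilde{\mathfrak{g}}_{\alpha,\beta}^{\,\gamma,k}f_{\rm even}(\theta) + \widetilde{\mathfrak{g}}_{\alpha,\beta}^{\,\gamma,k}f_{\rm odd}(\theta) \le \widetilde{\mathfrak{g}}_{\alpha,\beta}^{\,\gamma,k}f(\theta) + \widetilde{\mathfrak{g}}_{\alpha,\beta}^{\,\gamma,k}f(-\theta),
\end{gather*}
exactly as in \eqref{bg}. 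Moreover, pointwise on $(0,\pi)$,
\begin{gather*}
\big(e^{-t}\mathbb{H}_t^{\alpha,\beta}f_{\rm even}\big)^+ = e^{-t} H_t^{\alpha,\beta} f^+_{\rm even}, \qquad \big(e^{-t}\mathbb{H}_t^{\alpha,\beta}f_{\rm odd}\big)^+ = e^{-t} H_t^{\alpha+1,\beta+1} f^+_{\rm odd},
\end{gather*}
the proof being identical to that of \eqref{crse}--\eqref{crso}. Combining these facts with $L^p$ symmetrization yields
\begin{gather*}
\big\|\widetilde{\mathfrak{g}}_{\alpha,\beta}^{\,\gamma,k}f\big\|_{L^p(-\pi,\pi)} \simeq \big\|\widetilde{g}_{\alpha,\beta}^{\gamma,k}f^+_{\rm even}\big\|_{L^p(0,\pi)} + \big\|\widetilde{g}_{\alpha+1,\beta+1}^{\gamma,k}f^+_{\rm odd}\big\|_{L^p(0,\pi)},
\end{gather*}
and the theorem follows upon chaining this with the modified analogue of Proposition~\ref{sq}.

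The main obstacle is really the first step, that is, locating (or verifying) the non-symmetrized modified characterization at the end of \cite[Section~4]{L3} in a form applicable simultaneously at $(\alpha,\beta)$ and $(\alpha+1,\beta+1)$; this is precisely the point at which the case $\alpha+\beta=-1$ is handled, since there the unmodified Riesz-type potential is not available. Once the modified non-symmetrized characterization is in hand, the reflection-and-parity argument introduces no new difficulties beyond those already resolved for Theorem~\ref{thm:char}.
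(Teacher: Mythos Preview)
Your proposal is correct and matches the paper's own treatment: the paper does not write out a proof of Theorem~\ref{equivfun'} but only sketches it, saying the details ``consist of a combination of the facts and results described at the end of \cite[Section~4]{L3} and the arguments already used in this section,'' which is precisely the program you carry out. The one point you leave slightly implicit is the passage from the modified norm $\|\cdot\|_{\widetilde{\mathcal{L}}_{\alpha,\beta}^{p,\gamma}}$ to $\|\cdot\|_{\mathcal{L}_{\alpha,\beta}^{p,\gamma}}$; the paper records this isomorphism explicitly as ``the crucial fact'' just before the statement, and it is what lets your modified analogue of Proposition~\ref{sq} carry the unmodified potential norm on the left-hand side.
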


\section{Structural and embedding theorems for potential spaces} \label{sec:struc}

We now show some results revealing relations between the symmetrized potential spaces with dif\/ferent
parameters and also establishing mapping properties of certain operators with respect to the potential
spaces. At the end of this section we state an analogue of the classical Sobolev embedding theorem
for the symmetrized potential spaces.

The results of Section~\ref{sec:sob} suggest the following alternative def\/inition of Riesz transforms in the
symmetrized setting. For $k \ge 1$ integer, we let
\begin{gather*}
\mathfrak{R}_{\alpha,\beta}^{k}=
	\begin{cases}
		\mathfrak{D}_{\alpha,\beta}^{(k)}\mathbb{L}_{\alpha,\beta}^{-k/2}, &  \alpha+\beta\neq-1,\\
		\mathfrak{D}_{\alpha,\beta}^{(k)}(\id+\mathbb{L}_{\alpha,\beta})^{-k/2},&  \alpha+\beta=-1.
	\end{cases}
\end{gather*}
Notice that $\mathfrak{R}_{\alpha,\beta}^{k}$ is well def\/ined on $\mathbb{S}_{\alpha,\beta}$. In the structural theorem below
both $\mathfrak{R}_{\alpha,\beta}^{k}$ and $\mathfrak{D}_{\alpha,\beta}^{(k)}$ are understood as operators given initially
on $\mathbb{S}_{\alpha,\beta}$.
\begin{thm} \label{thm:struc}
Let ${\alpha,\beta} > -1$ and $p,q \in E({\alpha,\beta})$. Assume that $s,t>0$ and $k>0$ is even.
\begin{itemize}\itemsep=0pt
\item[$(i)$] If $p < q$, then $\mathcal{L}_{\alpha,\beta}^{q,s}(-\pi,\pi) \subset \mathcal{L}_{\alpha,\beta}^{p,s}(-\pi,\pi)$.
\item[$(ii)$] If $s < t$, then
$\mathcal{L}_{\alpha,\beta}^{p,t}(-\pi,\pi) \subset \mathcal{L}_{\alpha,\beta}^{p,s}(-\pi,\pi) \subset L^p(-\pi,\pi)$.
\end{itemize}
Moreover, the embeddings in $(i)$ and $(ii)$ are proper and continuous.
\begin{itemize}\itemsep=0pt
\item[$(iii)$] $\mathbb{L}_{\alpha,\beta}^{-t/2}$ establishes an isometric isomorphism between
 $\mathcal{L}_{\alpha,\beta}^{p,s}(-\pi,\pi)$ and $\mathcal{L}_{\alpha,\beta}^{p,s+t}(-\pi,\pi)$.
\item[$(iv)$] If $k<s$, then $\mathfrak{D}_{\alpha,\beta}^{(k)}$ extends to a bounded operator from
	$\mathcal{L}_{\alpha,\beta}^{p,s}(-\pi,\pi)$ to $\mathcal{L}_{\alpha+k,\beta+k}^{p,s-k}(-\pi,\pi)$.
	Moreover, $\mathfrak{D}_{\alpha,\beta}^{(k)}$ extends to a bounded operator from
	$\mathcal{L}_{\alpha,\beta}^{p,k}(-\pi,\pi)$ to $L^p(-\pi,\pi)$.
\item[$(v)$] $\mathfrak{R}_{\alpha,\beta}^k$ extends to a bounded operator from 	
	$\mathcal{L}_{\alpha,\beta}^{p,s}(-\pi,\pi)$ to $\mathcal{L}_{\alpha+k,\beta+k}^{p,s}(-\pi,\pi)$.
\end{itemize}
\end{thm}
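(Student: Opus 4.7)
The overall strategy is the one already sketched in the Introduction: reduce each claim to its counterpart in the non-symmetrized Jacobi setting via Proposition \ref{pot} and then quote the corresponding result from \cite{L2, L3}. Every statement in (i)--(v) decomposes cleanly under the even/odd splitting $f\mapsto(f_{\textrm{even}}^+,f_{\textrm{odd}}^+)$, where the even piece is governed by parameters $(\alpha,\beta)$ and the odd piece by $(\alpha+1,\beta+1)$; so each assertion on $(-\pi,\pi)$ becomes a pair of assertions on $(0,\pi)$ already available in the literature.

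For parts (i) and (ii) I would start by applying Proposition \ref{pot} to turn the hypothesis $f\in\mathcal{L}_{\alpha,\beta}^{q,s}(-\pi,\pi)$ (respectively $f\in\mathcal{L}_{\alpha,\beta}^{p,t}(-\pi,\pi)$) into the corresponding pair of non-symmetrized conditions on $(0,\pi)$, invoke the $p$- and $s$-monotonicity inclusions for the non-symmetrized Jacobi potential spaces established in \cite{L3}, and then reassemble via Proposition \ref{pot} to obtain the desired inclusions together with continuity of the embeddings. Properness follows by taking a function on $(0,\pi)$ witnessing properness in the non-symmetrized case and extending it evenly (or oddly) to $(-\pi,\pi)$. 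For (iii), writing $f=\mathbb{L}_{\alpha,\beta}^{-s/2}g$ uniquely (Proposition \ref{inj}), the key point is the semigroup identity $\mathbb{L}_{\alpha,\beta}^{-t/2}\mathbb{L}_{\alpha,\beta}^{-s/2}=\mathbb{L}_{\alpha,\beta}^{-(s+t)/2}$, which is immediate on $\mathbb{S}_{\alpha,\beta}$ by spectral multiplication and then extends to all of $L^p(-\pi,\pi)$ by the density of $\mathbb{S}_{\alpha,\beta}$ (Lemma \ref{lem:dens}) combined with the boundedness in Proposition \ref{ogr}; this gives the isometric embedding, and surjectivity onto $\mathcal{L}_{\alpha,\beta}^{p,s+t}(-\pi,\pi)$ is built into the definition.

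For (iv) I would exploit the fact that, because $k$ is \emph{even}, the composition $\mathfrak{d}_{\alpha,\beta}^{(k)}=D_{\alpha+k-1,\beta+k-1}\cdots D_{\alpha,\beta}$ preserves parity (each factor swaps even and odd), so that $\mathfrak{D}_{\alpha,\beta}^{(k)}f=\mathfrak{d}_{\alpha,\beta}^{(k)}f_{\textrm{even}}+\mathfrak{d}_{\alpha+1,\beta+1}^{(k)}f_{\textrm{odd}}$ is already presented as its own even/odd decomposition. Then the non-symmetrized boundedness of $\mathfrak{d}_{\alpha,\beta}^{(k)}$ from $\mathcal{L}_{\alpha,\beta}^{p,s}(0,\pi)$ into $\mathcal{L}_{\alpha+k,\beta+k}^{p,s-k}(0,\pi)$ (from the structural theory in \cite{L3}, with the endpoint case $s=k$ reducing to Theorem \ref{thm:sob1}) combined once more with Proposition \ref{pot} yields both assertions in (iv). Part (v) is then just the composition $\mathfrak{R}_{\alpha,\beta}^k=\mathfrak{D}_{\alpha,\beta}^{(k)}\circ\mathbb{L}_{\alpha,\beta}^{-k/2}$ of (iii) (with $s$ replaced by $s+k$) and (iv). The only genuinely delicate points I anticipate are the parity-preservation observation driving (iv) and the routine but notationally heavier modifications in the case $\alpha+\beta=-1$, where Riesz-type potentials must be replaced by Bessel-type ones $(\id+\mathbb{L}_{\alpha,\beta})^{-t/2}$ throughout; no new ingredients beyond Proposition \ref{pot} and the existing literature should be required.
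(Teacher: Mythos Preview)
Your proposal is correct and follows essentially the same route as the paper for the substantive parts (iii)--(v): the parity-preservation observation for even $k$, the reduction via Proposition~\ref{pot} to \cite[Theorem~3.1(iii)]{L3}, and the factorization of $\mathfrak{R}_{\alpha,\beta}^k$ through (iii) and (iv) are exactly what the paper does.

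For (i) and (ii) the paper is slightly more direct than your reduction through Proposition~\ref{pot}: it simply observes that $L^q(-\pi,\pi)\hookrightarrow L^p(-\pi,\pi)$ for $p<q$ (finite measure) to get (i), and uses the composition identity $\mathbb{L}_{\alpha,\beta}^{-t/2}=\mathbb{L}_{\alpha,\beta}^{-s/2}\mathbb{L}_{\alpha,\beta}^{-(t-s)/2}$ together with Proposition~\ref{ogr} to get (ii), without passing to $(0,\pi)$ at all. Similarly, for properness of (i) the paper takes $g\in L^p\setminus L^q$ directly and invokes injectivity (Proposition~\ref{inj}) rather than lifting a non-symmetrized witness. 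Your approach via Proposition~\ref{pot} works fine and has the virtue of uniformity, but the paper's direct arguments for these easy parts avoid an unnecessary detour.
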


\begin{proof}
Throughout the proof we assume $\alpha+\beta \neq -1$. The opposite case requires analogous arguments (with~(iv) and~(v) requiring a little bit more attention) and is left
to the reader.

To justify (i) it suf\/f\/ices to observe that $\|\cdot\|_{L^p(-\pi,\pi)}$ is controlled by
$\|\cdot\|_{L^q(-\pi,\pi)}$ whenever $p < q$. To demonstrate~(ii), let
$f\in\mathcal{L}_{\alpha,\beta}^{p,t}(-\pi,\pi)$. Then there exists $g\in L^p(-\pi,\pi)$ such that
$f=\mathbb{L}_{\alpha,\beta}^{-t/2}g$, so by Proposition~\ref{ogr}
\begin{gather}\label{comp}
f=\mathbb{L}_{\alpha,\beta}^{-t/2}g=\mathbb{L}_{\alpha,\beta}^{-s/2}\mathbb{L}_{\alpha,\beta}^{-(t-s)/2}g
	\in\mathcal{L}_{\alpha,\beta}^{p,s}(-\pi,\pi).
\end{gather}
Here second identity is straightforward for $g\in \mathbb{S}_{\alpha,\beta}$, and for $g \in L^p(-\pi,\pi)$
it follows by Proposition~\ref{ogr} and an approximation argument.

In view of the above, the inclusions in (i) and (ii) are continuous.
To see that the inclusion in~(i) is proper it suf\/f\/ices to take $f=\mathbb{L}_{\alpha,\beta}^{-s/2}g$ with some
$g\in L^p(-\pi,\pi){\setminus} L^q(-\pi,\pi)$. Then obviously $f\in \mathcal{L}_{\alpha,\beta}^{p,s}(-\pi,\pi)$.
On the other hand, $f\notin \mathcal{L}_{\alpha,\beta}^{q,s}(-\pi,\pi)$. Indeed, otherwise there would exist
$\widetilde{g}\in L^q(-\pi,\pi)$ such that $f=\mathbb{L}_{\alpha,\beta}^{-s/2}\widetilde{g}$, a contradiction with
injectivity of $\mathbb{L}_{\alpha,\beta}^{-s/2}$ on $L^p(-\pi,\pi)$, see Proposition~\ref{inj}.
To prove that the inclusions in (ii) are proper we f\/irst recall that an analogous result holds in the
non-symmetrized setting, see \cite[Theorem~3.1(i)]{L3}. This means that there exist
$h_1\in \mathcal{L}_{\alpha,\beta}^{p,s}(0,\pi){\setminus} \mathcal{L}_{\alpha,\beta}^{p,t}(0,\pi)$
and $h_2 \in L^p(0,\pi){\setminus} \mathcal{L}_{\alpha,\beta}^{p,s}(0,\pi)$.
Let $f_1$ and $f_2$ be even extensions of~$h_1$ and~$h_2$, respectively, to~$(-\pi,\pi)$.
Then Proposition~\ref{pot} implies
$f_1\in\mathcal{L}_{\alpha,\beta}^{p,s}(-\pi,\pi){\setminus}\mathcal{L}_{\alpha,\beta}^{p,t}(-\pi,\pi)$
and $f_2\in L^p(-\pi,\pi){\setminus}\mathcal{L}_{\alpha,\beta}^{p,s}(-\pi,\pi)$.

To show that (iii) holds, it suf\/f\/ices to observe that, see~\eqref{comp},
\begin{gather*}
\mathbb{L}_{\alpha,\beta}^{-t/2} \mathbb{L}_{\alpha,\beta}^{-s/2}g = \mathbb{L}_{\alpha,\beta}^{-(t+s)/2}g, \qquad g \in L^p(-\pi,\pi).
\end{gather*}

Proving (iv) is reduced to showing the bound
\begin{gather*}
\big\|\mathbb{L}_{\alpha+k,\beta+k}^{(s-k)/2}\mathfrak{D}_{\alpha,\beta}^{(k)}
	\mathbb{L}_{\alpha,\beta}^{-s/2}g\big\|_{L^p(-\pi,\pi)}\lesssim\|g\|_{L^p(-\pi,\pi)}, \qquad g \in \mathbb{S}_{\alpha,\beta}.
\end{gather*}
Since $k$ is even,
$\mathfrak{d}_{\alpha,\beta}^{(k)}g_{\textrm{even}}$ and $\mathfrak{d}_{\alpha+1,\beta+1}^{(k)}g_{\textrm{odd}}$
are even and odd functions, respectively. Therefore, in view of~\eqref{dec},
\begin{gather*}
\mathbb{L}_{\alpha+k,\beta+k}^{(s-k)/2}\mathfrak{D}_{\alpha,\beta}^{(k)}\mathbb{L}_{\alpha,\beta}^{-s/2}g
=\big(\mathbb{L}_{\alpha+k,\beta+k}^{(s-k)/2}\big)_{\textrm{e}}\mathfrak{d}_{\alpha,\beta}^{(k)}
\big(\mathbb{L}_{\alpha,\beta}^{-s/2}\big)_{\textrm{e}} g_{\textrm{even}}\!
+\!\big(\mathbb{L}_{\alpha+k,\beta+k}^{(s-k)/2}\big)_{\textrm{o}} \mathfrak{d}_{\alpha+1,\beta+1}^{(k)}
\big(\mathbb{L}_{\alpha, \beta}^{-s/2}\big)_{\textrm{o}} g_{\textrm{odd}}.
\end{gather*}
Now recall that in the non-symmetrized setting, see \cite[Theorem~3.1(iii)]{L3}, we have
\begin{gather*}
\big\|L_{\alpha+k,\beta+k}^{(s-k)/2}\mathfrak{d}_{\alpha,\beta}^{(k)}L_{\alpha,\beta}^{-s/2}h\big\|_{L^p(0,\pi)}
\lesssim\|h\|_{L^p(0,\pi)}, \qquad h \in S_{\alpha,\beta}.
\end{gather*}
Using this bound and its variant with $\alpha$ and $\beta$ replaced by $\alpha+1$ and $\beta+1$, respectively,
we obtain
\begin{gather*}
\big\|\mathbb{L}_{\alpha+k, \beta+k}^{(s-k)/2}\mathfrak{D}_{\alpha,\beta}^{(k)}
	\mathbb{L}_{\alpha,\beta}^{-s/2}g\big\|_{L^p(-\pi,\pi)}\\
\simeq \big\|\big(\mathbb{L}_{\alpha+k, \beta+k}^{(s-k)/2}\big)_{\textrm{e}}\mathfrak{d}_{\alpha,\beta}^{(k)}
	\big(\mathbb{L}_{\alpha,\beta}^{-s/2}\big)_{\textrm{e}} g_{\textrm{even}}\big\|_{L^p(-\pi,\pi)}
+\big\|\big(\mathbb{L}_{\alpha+k, \beta+k}^{(s-k)/2}\big)_{\textrm{o}}\mathfrak{d}_{\alpha+1,\beta+1}^{(k)}
	\big(\mathbb{L}_{\alpha, \beta}^{-s/2}\big)_{\textrm{o}} g_{\textrm{odd}}\big\|_{L^p(-\pi,\pi)}\\
\simeq\big\|L_{\alpha+k, \beta+k}^{(s-k)/2}\mathfrak{d}_{\alpha,\beta}^{(k)}L_{\alpha,\beta}^{-s/2}
	g_{\textrm{even}}^+\big\|_{L^p(0,\pi)}
+\big\|L_{\alpha+1+k, \beta+1+k}^{(s-k)/2}\mathfrak{d}_{\alpha+1,\beta+1}^{(k)}
	L_{\alpha+1, \beta+1}^{-s/2}g_{\textrm{odd}}^+\big\|_{L^p(0,\pi)}\\
\lesssim \big\|g_{\textrm{even}}^+\big\|_{L^p(0,\pi)}
	+\big\|g_{\textrm{odd}}^+\big\|_{L^p(0,\pi)}\simeq\|g\|_{L^p(-\pi,\pi)},
\end{gather*}
as required.

Finally, (v) is a consequence of (iv) and the boundedness of $\mathbb{L}_{\alpha,\beta}^{-k/2}$ from
$\mathcal{L}_{\alpha,\beta}^{p,s}(-\pi,\pi)$ to $\mathcal{L}_{\alpha,\beta}^{p,s+k}(-\pi,\pi)$, see (iii).
\end{proof}

Parts (iv) and (v) of Theorem \ref{thm:struc}, as stated, do not hold when $k$ is an odd number. Roughly speaking,
this is because in such a case $\mathfrak{D}_{\alpha,\beta}^{(k)}$ switches symmetry of functions from
even to odd and vice versa. However, a slight modif\/ication of $\mathfrak{D}_{\alpha,\beta}^{(k)}$ makes
the statements (iv) and (v) true for all $k \ge 1$. Indeed, one easily verif\/ies that the arguments
proving (iv) and (v) go through with $\mathfrak{D}_{\alpha,\beta}^{(k)}$ replaced by
$\widetilde{\mathfrak{D}}_{\alpha,\beta}^{(k)}$ def\/ined by
$\widetilde{\mathfrak{D}}_{\alpha,\beta}^{(k)}f(\theta)= \sign^k(\theta) \mathfrak{D}_{\alpha,\beta}^{(k)}f(\theta)$.

Another interesting question is whether there are any inclusions between potential spaces with
dif\/ferent parameters of type. It turns out that in general the answer is negative.
\begin{prop}
Let $\alpha, \beta, \gamma, \delta>-1$ be such that $(\alpha,\beta)\neq (\gamma, \delta)$.
Assume that $p \in E({\alpha,\beta}) \cap E(\gamma,\delta)$ and $\alpha,\beta,\gamma,\delta \le -1/p+1/2$.
Then neither the inclusion $\mathcal{L}_{\alpha,\beta}^{p,1}(-\pi,\pi)\subset
\mathcal{L}_{\gamma, \delta}^{p,1}(-\pi,\pi)$ nor the inclusion
$\mathcal{L}_{\gamma, \delta}^{p,1}(-\pi,\pi)\subset \mathcal{L}_{\alpha,\beta}^{p,1}(-\pi,\pi)$ holds.
\end{prop}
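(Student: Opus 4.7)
The plan is to use Proposition \ref{pot} to reduce to the non-symmetrized setting on $(0,\pi)$, and then produce an explicit counterexample out of the bottom Jacobi function $\phi_0^{\alpha,\beta}$. By swapping the roles of $(\alpha,\beta)$ and $(\gamma,\delta)$, it is enough to establish one of the two non-inclusions, say $\mathcal{L}_{\alpha,\beta}^{p,1}(-\pi,\pi)\not\subset\mathcal{L}_{\gamma,\delta}^{p,1}(-\pi,\pi)$; the other then follows by the identical argument with the parameters interchanged.

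I would take $h=\phi_0^{\alpha,\beta}$, which on $(0,\pi)$ is a constant multiple of $\Psi^{\alpha,\beta}$, and extend it to an even function $f$ on $(-\pi,\pi)$. Since $h$ is an eigenfunction of $L_{\alpha,\beta}$, it is a scalar multiple of $L_{\alpha,\beta}^{-1/2}h$ (and similarly for the Bessel potential when $\alpha+\beta=-1$); combined with $h\in L^p(0,\pi)$ for $p\in E(\alpha,\beta)$ this gives $h\in\mathcal{L}_{\alpha,\beta}^{p,1}(0,\pi)$, hence $f\in\mathcal{L}_{\alpha,\beta}^{p,1}(-\pi,\pi)$ by Proposition \ref{pot}. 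It then remains to check that $f\notin\mathcal{L}_{\gamma,\delta}^{p,1}(-\pi,\pi)$, which in view of Proposition \ref{pot} and the Sobolev/potential isomorphism from \cite[Theorem A]{L2} reduces to showing $D_{\gamma,\delta}h\notin L^p(0,\pi)$.

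A direct logarithmic-derivative computation applied to $\phi_0^{\alpha,\beta}(\theta)=c_0\Psi^{\alpha,\beta}(\theta)$ yields
\begin{gather*}
D_{\gamma,\delta}\phi_0^{\alpha,\beta}(\theta) = \left[\frac{\alpha-\gamma}{2}\cot\frac{\theta}{2}-\frac{\beta-\delta}{2}\tan\frac{\theta}{2}\right]\phi_0^{\alpha,\beta}(\theta).
\end{gather*}
Near $\theta=0$ the cotangent piece behaves like a nonzero multiple of $\theta^{\alpha-1/2}$ whenever $\alpha\ne\gamma$, while the tangent piece is bounded there; under the hypothesis $\alpha\le -1/p+1/2$ the function $\theta^{\alpha-1/2}$ is not $L^p$-integrable at the origin. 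Symmetrically, near $\theta=\pi$ the tangent piece behaves like $(\pi-\theta)^{\beta-1/2}$ whenever $\beta\ne\delta$, and under $\beta\le -1/p+1/2$ this is not $L^p$-integrable at $\pi$, while the cotangent contribution is bounded. Since $(\alpha,\beta)\ne(\gamma,\delta)$, at least one of $\alpha\ne\gamma$ or $\beta\ne\delta$ holds, and the corresponding endpoint singularity is of the problematic order.

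The only delicate point is ruling out accidental cancellation between the cotangent and tangent contributions. This is immediate because the two singular terms are supported at disjoint endpoints of $(0,\pi)$: the $\cot(\theta/2)$ term is bounded near $\pi$ and the $\tan(\theta/2)$ term is bounded near $0$. With that observation, the endpoint analysis forces $D_{\gamma,\delta}h\notin L^p(0,\pi)$, hence $h\notin\mathcal{L}_{\gamma,\delta}^{p,1}(0,\pi)$ and $f\notin\mathcal{L}_{\gamma,\delta}^{p,1}(-\pi,\pi)$, completing the argument.
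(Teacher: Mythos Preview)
Your proposal is correct and follows essentially the same route as the paper: the counterexample is the even extension of $\Psi^{\alpha,\beta}$ (equivalently $\phi_0^{\alpha,\beta}$), membership in $\mathcal{L}_{\alpha,\beta}^{p,1}$ is immediate, and non-membership in $\mathcal{L}_{\gamma,\delta}^{p,1}$ is reduced via the Sobolev isomorphism to the explicit computation of $D_{\gamma,\delta}\Psi^{\alpha,\beta}$ and the endpoint behavior $\theta^{\alpha-1/2}$, $(\pi-\theta)^{\beta-1/2}$. The only cosmetic differences are that the paper invokes Proposition~\ref{sob} (which packages Proposition~\ref{pot} with \cite[Theorem~A]{L2}) and verifies $\Psi^{\alpha,\beta}\in W_{\alpha,\beta}^{p,1}(0,\pi)$ via $D_{\alpha,\beta}\Psi^{\alpha,\beta}=0$ rather than the eigenfunction argument you use.
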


\begin{proof}
Take $f_1 = \Psi^{\alpha,\beta}$ and $f_2 = \Psi^{\gamma,\delta}$.
With the aid of \cite[formula~(8)]{L2} one verif\/ies that $f_1^+, f_2^+$, $D_{\alpha,\beta}f_1^+,
D_{\gamma,\delta}f_2^+\in L^p(0,\pi)$, but $D_{\alpha,\beta}f_2^+, D_{\gamma,\delta}f_1^+ \notin L^p(0,\pi)$.
Thus $f_1^+\in W_{\alpha,\beta}^{p,1}(0,\pi){\setminus} W_{\gamma,\delta}^{p,1}(0,\pi)$ and
$f_2^+\in W_{\gamma,\delta}^{p,1}(0,\pi){\setminus} W_{\alpha,\beta}^{p,1}(0,\pi)$.
Since~$f_1$ and~$f_2$ are even functions, it follows by Proposition~\ref{sob} that
$f_1\in\mathcal{L}_{\alpha,\beta}^{p,1}(-\pi,\pi){\setminus} \mathcal{L}_{\gamma,\delta}^{p,1}(-\pi,\pi)$ and
$f_2\in\mathcal{L}_{\gamma,\delta}^{p,1}(-\pi,\pi){\setminus} \mathcal{L}_{\alpha,\beta}^{p,1}(-\pi,\pi)$.
\end{proof}

We f\/inish this section with a counterpart of the classical Sobolev embedding theorem.
The statement below is a direct consequence of an analogous result
in the non-symmetrized situation \cite[Theorem~3.2]{L3} and Proposition~\ref{pot}.
We leave the details to the interested readers.

\begin{thm} \label{thm:embed}
Let ${\alpha,\beta} > -1$, $p \in E({\alpha,\beta})$ and $1\le q < p({\alpha,\beta})$.
\begin{itemize}\itemsep=0pt
\item[$(i)$]
If $s > 0$ is such that $1/q \ge 1/p -s$, then $\mathcal{L}_{\alpha,\beta}^{p,s}(-\pi,\pi) \subset L^q(-\pi,\pi)$ and
\begin{gather} \label{embest}
\|f\|_{q} \lesssim \|f\|_{\mathcal{L}_{\alpha,\beta}^{p,s}(-\pi,\pi)}, \qquad f \in \mathcal{L}_{\alpha,\beta}^{p,s}(-\pi,\pi).
\end{gather}
\item[$(ii)$]
If ${\alpha,\beta} \ge -1/2$ and $s > 1/p$, then $\mathcal{L}_{\alpha,\beta}^{p,s}(-\pi,\pi) \subset C(-\pi,\pi)$
and \eqref{embest} holds with $q=\infty$.
\end{itemize}
\end{thm}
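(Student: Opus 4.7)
The approach is to reduce the result to the non-symmetrized Sobolev embedding \cite[Theorem~3.2]{L3} via the even/odd decomposition from Proposition~\ref{pot}. Given $f\in\mathcal{L}_{\alpha,\beta}^{p,s}(-\pi,\pi)$, Proposition~\ref{pot} supplies $f_{\textrm{even}}^{+}\in\mathcal{L}_{\alpha,\beta}^{p,s}(0,\pi)$ and $f_{\textrm{odd}}^{+}\in\mathcal{L}_{\alpha+1,\beta+1}^{p,s}(0,\pi)$ together with the norm equivalence
\begin{gather*}
\|f\|_{\mathcal{L}_{\alpha,\beta}^{p,s}(-\pi,\pi)} \simeq \|f_{\textrm{even}}^{+}\|_{\mathcal{L}_{\alpha,\beta}^{p,s}(0,\pi)} + \|f_{\textrm{odd}}^{+}\|_{\mathcal{L}_{\alpha+1,\beta+1}^{p,s}(0,\pi)}.
\end{gather*}

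For part (i), I would apply \cite[Theorem~3.2(i)]{L3} with parameter pair $(\alpha,\beta)$ to $f_{\textrm{even}}^{+}$ and with $(\alpha+1,\beta+1)$ to $f_{\textrm{odd}}^{+}$. Both applications are legitimate under the hypothesis $1\le q<p(\alpha,\beta)$, since $\alpha+1,\beta+1\ge-1/2$ forces $p(\alpha+1,\beta+1)=\infty\ge p(\alpha,\beta)$. Combining the two resulting $L^{q}(0,\pi)$ estimates with the elementary identity $\|F\|_{L^{q}(-\pi,\pi)}\simeq\|F_{\textrm{even}}^{+}\|_{L^{q}(0,\pi)}+\|F_{\textrm{odd}}^{+}\|_{L^{q}(0,\pi)}$ and the displayed norm equivalence yields the embedding and the bound \eqref{embest}.

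For part (ii), the hypothesis $\alpha,\beta\ge-1/2$ passes to both parameter pairs $(\alpha,\beta)$ and $(\alpha+1,\beta+1)$, so \cite[Theorem~3.2(ii)]{L3} applies to each summand. This immediately gives $f\in L^{\infty}(-\pi,\pi)$ with the required bound, and continuity on $(-\pi,\pi)\setminus\{0\}$ since $f$ is obtained from two functions continuous on $(0,\pi)$ by even/odd reflection. The main obstacle, and the only step not completely mechanical, is continuity of $f$ at $\theta=0$. The even part extends continuously across $0$ by definition; for the odd part, I would argue that, under $\alpha+1\ge1/2$, the basis functions $\phi_{n}^{\alpha+1,\beta+1}$ vanish at $\theta=0$ (from the asymptotic $\Psi^{\alpha+1,\beta+1}(\theta)\sim\theta^{\alpha+3/2}$ near $0$), hence so does $f_{\textrm{odd}}^{+}=L_{\alpha+1,\beta+1}^{-s/2}h$ in view of the continuity up to the endpoint guaranteed by \cite[Theorem~3.2(ii)]{L3} combined with approximation by finite linear combinations of basis elements. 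Everything else reduces mechanically to the non-symmetrized theorem.
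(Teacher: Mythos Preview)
Your approach is exactly what the paper intends: it states only that the theorem is ``a direct consequence of an analogous result in the non-symmetrized situation \cite[Theorem~3.2]{L3} and Proposition~\ref{pot}'' and leaves the details to the reader, so your reduction via the even/odd decomposition is precisely the route indicated. One small correction: in part~(ii) the phrase ``the even part extends continuously across $0$ by definition'' is not quite right---evenness alone does not force continuity at~$0$; you need the same continuity-up-to-the-endpoint input from \cite[Theorem~3.2(ii)]{L3} that you correctly invoke for the odd part (here it suffices that the limit at~$0$ exists, whereas for the odd part you additionally need it to vanish, which your argument via $\Psi^{\alpha+1,\beta+1}(\theta)\sim\theta^{\alpha+3/2}$ handles).
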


\section{Sample applications of potential spaces}

The study of symmetrized potential spaces performed in the previous sections reveals that the symmetrized
objects inherit many of the properties of their non-symmetrized prototypes. Furthermore, most of the proofs
were based on the arguments relying on suitable decompositions of the operators
into their even and odd symmetric parts. This actually reduced our problems to the non-symmetrized setup.
Therefore, it comes as no surprise that both theories, symmetrized and non-symmetrized, have parallel
applications. Below we present some results which illustrate the utility of the symmetrized potential spaces.
The proofs combine the symmetry arguments that have already appeared in this paper with the
results from \cite[Section 5]{L3}. We leave them to the reader.

Given some initial data $f \in L^2(-\pi,\pi)$,
consider the following Cauchy problem based on the symmetrized Jacobi operator:
\begin{gather*}
\begin{cases}
\big(i\partial_t + \mathbb{L}_{\alpha,\beta}\big) u(\theta,t)=0, \\
u(\theta,0)=f(\theta),
\end{cases}
\qquad  {\rm a.a.} \  \theta \in (-\pi,\pi),\quad t\in\mathbb{R}.
\end{gather*}
Then $\exp(it\mathbb{L}_{\alpha,\beta})f$, understood spectrally, is a solution to this problem.
The next result shows that the theory of symmetrized Jacobi potential spaces can be used to
study pointwise almost everywhere convergence of this solution to the initial condition.
\begin{prop}
Let ${\alpha,\beta} > -1$ and $s > 1/2$. Then for each $f\in \mathcal{L}_{\alpha,\beta}^{2,s}(-\pi,\pi)$
\begin{gather*}
\lim_{t\rightarrow0} \exp(it\mathbb{L}_{\alpha,\beta})f(\theta)=
	f(\theta) \qquad {\rm a.a.} \  \theta \in (-\pi,\pi).
\end{gather*}
\end{prop}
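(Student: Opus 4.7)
The plan is to reduce the problem to the non-symmetrized Jacobi setting, following the same strategy that was used throughout the paper for Proposition~\ref{pot} and Theorem~\ref{thm:char}. First I would split $f = f_{\textrm{even}} + f_{\textrm{odd}}$ and exploit the fact that $\mathbb{L}_{\alpha,\beta}$ commutes with reflections (verified on $\mathbb{S}_{\alpha,\beta}$ by the parity of $\Phi_n^{\alpha,\beta}$, exactly as in the proof of Theorem~\ref{thm:char}), so that $\exp(it\mathbb{L}_{\alpha,\beta})$ preserves parity. Moreover, by Proposition~\ref{pot} we have $f_{\textrm{even}}^{+}\in \mathcal{L}_{\alpha,\beta}^{2,s}(0,\pi)$ and $f_{\textrm{odd}}^{+}\in \mathcal{L}_{\alpha+1,\beta+1}^{2,s}(0,\pi)$, and both carry an $\mathcal{L}^{2,s}$ norm bounded by $\|f\|_{\mathcal{L}_{\alpha,\beta}^{2,s}(-\pi,\pi)}$.

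The next step is to derive the analogues of \eqref{crse} and \eqref{crso} for the unitary group, namely the pointwise identities (in $L^2(0,\pi)$)
\begin{gather*}
\big(\exp(it\mathbb{L}_{\alpha,\beta}) f_{\textrm{even}}\big)^{+}
 = \exp(itL_{\alpha,\beta}) f_{\textrm{even}}^{+}, \qquad
\big(\exp(it\mathbb{L}_{\alpha,\beta}) f_{\textrm{odd}}\big)^{+}
 = \exp(itL_{\alpha+1,\beta+1}) f_{\textrm{odd}}^{+}.
\end{gather*}
These follow from the spectral representation together with \eqref{Phi} and the identity $\lambda_{n+1}^{\alpha,\beta}=\lambda_{n}^{\alpha+1,\beta+1}$, exactly as in the arguments that gave~\eqref{dec} and \eqref{crse}--\eqref{crso}; the only difference is that the series now converge in $L^2$ rather than pointwise, but this is enough for our purposes.

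Now I would invoke the non-symmetrized analogue proved in \cite[Section~5]{L3}, which states that for $h\in \mathcal{L}_{\alpha,\beta}^{2,s}(0,\pi)$ with $s>1/2$, $\exp(itL_{\alpha,\beta})h(\theta) \to h(\theta)$ almost everywhere on $(0,\pi)$ as $t\to 0$. Applied to $h=f_{\textrm{even}}^{+}$ (with parameters $\alpha,\beta$) and $h=f_{\textrm{odd}}^{+}$ (with parameters $\alpha+1,\beta+1$), and combined with the identities above, this yields pointwise a.e.\ convergence of $\exp(it\mathbb{L}_{\alpha,\beta})f_{\textrm{even}}$ and $\exp(it\mathbb{L}_{\alpha,\beta})f_{\textrm{odd}}$ to the respective limits on $(0,\pi)$. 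By the parity of these functions the convergence extends to $(-\pi,0)$ and summing gives the desired conclusion on $(-\pi,\pi)$.

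The main technical obstacle is verifying the semigroup-type identities above, since $\exp(it\mathbb{L}_{\alpha,\beta})f$ is a priori defined only as an $L^2$ limit and one must be careful that restriction to $(0,\pi)$ commutes with this limit. This, however, is routine: the partial sums are all even (resp.\ odd) functions, so their restrictions to $(0,\pi)$ converge in $L^2(0,\pi)$ to the restriction of the limit, and the resulting $L^2(0,\pi)$ series coincides with the spectral definition of $\exp(itL_{\alpha,\beta})f_{\textrm{even}}^{+}$ (resp.\ $\exp(itL_{\alpha+1,\beta+1})f_{\textrm{odd}}^{+}$) via~\eqref{Phi}. Once this is in place, the whole argument is a straightforward symmetry reduction to~\cite{L3}.
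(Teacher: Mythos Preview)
Your proposal is correct and follows precisely the route the paper indicates: the paper does not write out a proof but explicitly states that the argument ``combine[s] the symmetry arguments that have already appeared in this paper with the results from \cite[Section~5]{L3}'' and leaves the details to the reader. Your even/odd decomposition via Proposition~\ref{pot}, the spectral identities for $\exp(it\mathbb{L}_{\alpha,\beta})$ analogous to \eqref{crse}--\eqref{crso}, and the appeal to the non-symmetrized result in~\cite{L3} are exactly those details.
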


Furthermore, one can estimate a mixed norm of the solution in terms of the potential norm of the initial
condition.
\begin{prop}
Let ${\alpha,\beta} > -1$ and $p \in E({\alpha,\beta})$. Assume that $q \ge 2$ and $s>0$ is such that
$s \ge 3/2 + \max\{\alpha,\beta\}$ and $\alpha+\beta$ is integer. Then
\begin{gather*}
\big\|\exp(it\mathbb{L}_{\alpha,\beta})f\big\|_{L_{\theta}^p((-\pi,\pi), L_t^q(0,2\pi))} \lesssim
	\|f\|_{\mathcal L_{\alpha,\beta}^{2,s+1-2/q}(-\pi,\pi)},\qquad f\in \mathcal L_{\alpha,\beta}^{2,s+1-2/q}(-\pi,\pi).
\end{gather*}
\end{prop}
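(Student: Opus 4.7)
The proof proposal follows the general symmetrization reduction employed throughout the paper. First, decompose $f = f_{\textrm{even}} + f_{\textrm{odd}}$. Since $\mathbb{L}_{\alpha,\beta}$ acts separately on even and odd functions, the propagator $\exp(it\mathbb{L}_{\alpha,\beta})$ also preserves parity, so $\exp(it\mathbb{L}_{\alpha,\beta}) f_{\textrm{even}}$ remains even in $\theta$ and $\exp(it\mathbb{L}_{\alpha,\beta}) f_{\textrm{odd}}$ remains odd in $\theta$. Exactly as for the Poisson semigroup in \eqref{crse}--\eqref{crso}, using the spectral representation together with \eqref{Phi} and the identity $\lambda_{n+1}^{\alpha,\beta} = \lambda_n^{\alpha+1,\beta+1}$, I would derive the pointwise identities
\begin{gather*}
\big(\exp(it\mathbb{L}_{\alpha,\beta})f\big)^+_{\textrm{even}}(\theta) = \exp(itL_{\alpha,\beta}) f^+_{\textrm{even}}(\theta), \qquad
\big(\exp(it\mathbb{L}_{\alpha,\beta})f\big)^+_{\textrm{odd}}(\theta) = \exp(itL_{\alpha+1,\beta+1}) f^+_{\textrm{odd}}(\theta),
\end{gather*}
valid for $\theta \in (0,\pi)$.

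Next, I would pass the even/odd decomposition through the mixed norm. Since for each fixed $\theta$ the time-$L^q$ norm is just a number, the norm equivalence $\|F\|_p \simeq \|F_{\textrm{even}}\|_p + \|F_{\textrm{odd}}\|_p$ (applied to the $\theta$-function obtained after taking the inner $L^q_t$ norm) gives
\begin{gather*}
\big\|\exp(it\mathbb{L}_{\alpha,\beta})f\big\|_{L^p_\theta L^q_t} \simeq \big\|\exp(itL_{\alpha,\beta})f^+_{\textrm{even}}\big\|_{L^p_\theta((0,\pi),L^q_t)} + \big\|\exp(itL_{\alpha+1,\beta+1})f^+_{\textrm{odd}}\big\|_{L^p_\theta((0,\pi),L^q_t)},
\end{gather*}
where I have used parity to fold the $\theta$-integration onto $(0,\pi)$ with a harmless factor. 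At this point the problem has been reduced to the non-symmetrized Jacobi setting.

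The two resulting mixed-norm estimates are exactly instances (at parameters $(\alpha,\beta)$ and $(\alpha+1,\beta+1)$) of the Strichartz-type estimate for the non-symmetrized Jacobi Schr\"odinger propagator established in \cite[Section 5]{L3}. Applying that result to each piece bounds them by $\|f^+_{\textrm{even}}\|_{\mathcal L_{\alpha,\beta}^{2,s+1-2/q}(0,\pi)}$ and $\|f^+_{\textrm{odd}}\|_{\mathcal L_{\alpha+1,\beta+1}^{2,s+1-2/q}(0,\pi)}$, respectively. Proposition~\ref{pot} then bundles these two quantities into the single symmetrized norm $\|f\|_{\mathcal L_{\alpha,\beta}^{2,s+1-2/q}(-\pi,\pi)}$, completing the proof.

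The main obstacle is purely bookkeeping at the parameter level: the non-symmetrized estimate for the odd piece must be applied at $(\alpha+1,\beta+1)$, so one has to check that the hypotheses of the theorem transfer under this shift. The integrality of $\alpha+\beta$ is preserved since $(\alpha+1)+(\beta+1) = \alpha+\beta+2$ is also an integer, and the threshold $s \ge 3/2 + \max\{\alpha,\beta\}$ must be verified to yield the corresponding hypothesis of the non-symmetrized Strichartz estimate at the shifted parameters as well. Once these compatibility issues are checked against the exact statement of the non-symmetrized result in \cite{L3}, the proof reduces, as the author notes, to a mechanical combination of symmetry arguments already used for Theorem~\ref{thm:char}.
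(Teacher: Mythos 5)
Your strategy is exactly the one the paper intends: the paper gives no details for this proposition, saying only that the proof combines the symmetry arguments already used (the propagator analogues of \eqref{crse}--\eqref{crso}, parity preservation, folding the $\theta$-integral onto $(0,\pi)$, and Proposition~\ref{pot}) with the mixed-norm result of \cite[Section~5]{L3}, and that is precisely your reduction. A very minor remark: unlike the Poisson semigroup, $\exp(it\mathbb{L}_{\alpha,\beta})$ has no smoothing, so the identities $\big(\exp(it\mathbb{L}_{\alpha,\beta})f\big)^+_{\rm even}=\exp(itL_{\alpha,\beta})f^+_{\rm even}$ and $\big(\exp(it\mathbb{L}_{\alpha,\beta})f\big)^+_{\rm odd}=\exp(itL_{\alpha+1,\beta+1})f^+_{\rm odd}$ should be read a.e.\ (coming from the spectral $L^2$ decomposition and \eqref{Phi} together with $\lambda_{n+1}^{\alpha,\beta}=\lambda_n^{\alpha+1,\beta+1}$), which is all the norm computation needs.

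The one point that cannot be waved through is the very one you flag and then leave open: the transfer of the hypothesis $s \ge 3/2+\max\{\alpha,\beta\}$ to the odd piece. The odd part is estimated by invoking the non-symmetrized result at the shifted parameters $(\alpha+1,\beta+1)$ but with the \emph{same} potential order $s+1-2/q$ (this is forced by Proposition~\ref{pot}, which controls $f^+_{\rm odd}$ in $\mathcal{L}^{2,s+1-2/q}_{\alpha+1,\beta+1}(0,\pi)$). If the statement in \cite{L3} is applied verbatim at $(\alpha+1,\beta+1)$, its threshold becomes $s' \ge 3/2+\max\{\alpha+1,\beta+1\}=5/2+\max\{\alpha,\beta\}$, so writing $s+1-2/q=s'+1-2/q$ you would need $s \ge 5/2+\max\{\alpha,\beta\}$, which the stated hypothesis does not provide on the range $3/2+\max\{\alpha,\beta\}\le s<5/2+\max\{\alpha,\beta\}$. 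So a purely black-box citation does not close the argument: one must go into the proof (or exact formulation) of the result in \cite[Section~5]{L3} and check that the odd piece goes through under $s\ge 3/2+\max\{\alpha,\beta\}$ --- for instance because the eigenvalues are literally the same, $\lambda_n^{\alpha+1,\beta+1}=\lambda_{n+1}^{\alpha,\beta}$, the eigenfunctions $\phi_n^{\alpha+1,\beta+1}$ are no worse than $\phi_n^{\alpha,\beta}$ in the bounds the proof actually uses, and the periodicity condition is preserved since $(\alpha+1)+(\beta+1)\in\mathbb{Z}$ --- i.e., that the threshold stated in the present proposition is calibrated so that both the $(\alpha,\beta)$ and the $(\alpha+1,\beta+1)$ applications are covered. (The other compatibility items are harmless: $E(\alpha+1,\beta+1)=(1,\infty)\supseteq E(\alpha,\beta)$ and integrality of $\alpha+\beta+2$.) Until that verification is actually carried out, your proof is incomplete at exactly this step; everything else coincides with the paper's intended argument.
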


\subsection*{Acknowledgment}
The author would like to express his gratitude to Professor Adam Nowak for indicating the topic and
constant support during the preparation of this paper. 
Research supported by the National Science Centre of Poland, project No.~2013/09/N/ST1/04120.

\pdfbookmark[1]{References}{ref}
\LastPageEnding

\end{document}